\newtheorem{theorem}{Theorem}[section]
\newtheorem{definition}{Definition}[section]
\newtheorem{lemma}{Lemma}
\newtheorem{remark}{Remark}
\newtheorem{corollary}{Corollary}
\newtheorem{example}{Example}
\newtheorem{proposition}{Proposition}
\begin{document}
\title{Arc shift number and region arc shift number for virtual knots}

\author{ K. Kaur} 
\address{Kirandeep Kaur\\ Department of Mathematics\\ Indian Institute of Technology Ropar\\
 Nangal Road, Rupnagar, Punjab 140001, INDIA } 
\email{kirandeep.kaur@iitrpr.ac.in}

\author{ A. Gill} 
\address{Amrendra Gill\\ Department of Mathematics\\ Indian Institute of Technology Ropar\\
 Nangal Road, Rupnagar, Punjab 140001, INDIA } 
\email{amrendra.gill@iitrpr.ac.in}

\author{ M. Prabhakar } 
\address{Madeti Prabhakar \\Department of Mathematics\\ Indian Institute of Technology Ropar\\
 Nangal Road, Rupnagar, Punjab 140001, INDIA}  
\email{prabhakar@iitrpr.ac.in}

\begin{abstract}
In this paper, we formulate a new local move on virtual knot diagram, called arc shift move. Further, we extend it to another local move called region arc shift defined on a region of a virtual knot diagram. We establish that these arc shift and region arc shift moves are unknotting operations by showing that any virtual knot diagram can be turned into trivial knot using arc shift (region arc shift) moves. Based upon the arc shift move and region arc shift move, we define two virtual knot invariants, arc shift number and region arc shift number respectively.
\end{abstract}

\keywords{ virtual knot; Gauss diagram; forbidden moves}
\makeatletter{\renewcommand*{\@makefnmark}{}
\footnotetext{2010 {\it Mathematics Subject Classifications.} 57M25, 57M27.}\makeatother}
\maketitle
\section{Introduction} Virtual knot theory introduced by L.H. Kauffman \cite{kauffman1999virtual} extends classical knot theory to more general study of knots in the thickened surfaces of higher genus. Classical knot theory as a subclass deals with knots in thickened sphere. Unlike in classical knots, crossing change fails to be an unknotting operation for virtual knots. In this paper, we propose a new local move on a virtual knot diagram, called \emph{arc shift}, which happens to be an unknotting operation for virtual knots assisted by generalized Reidemeister moves. Gaining motivation from the region crossing change defined in \cite{shimizu2014region}, we extend the notion of \emph{arc shift} to \emph{region arc shift} defined on a region in virtual knot diagram which turns out to be an unknotting operation too. Minimum number of \emph{arc shift}(respectively, \emph{region arc shift}) moves needed to unknot a virtual knot $K$ is defined as \emph{arc shift number}(respectively, \emph{region arc shift number}) of $K$ denoted by $A(K)$(respectively, $R(K)$). It is well known that every virtual knot diagram can be made trivial via a sequence of forbidden moves and generalized Reidemeister moves as shown in \cite{kanenobu2001forbidden,nelson2001unknotting}. The forbidden number $F(K)$ in \cite{crans2015forbidden} is defined as the minimum number of forbidden moves needed to deform $K$ into a trivial knot. We prove that for a virtual knot $K$, $R(K)\leq F(K)$ and provide an explicit example where the inequality $R(K)\leq F(K)$ is indeed a strict inequality.\\
This paper is organized as follows. In Section 2, we briefly recall the definition of virtual knots including Gauss diagrams and forbidden moves. Section 3 defines the arc shift move, discussing its Gauss diagram version and properties of the arc shift move. In Section 4, we prove arc shift as an unknotting operation and discuss bound on the arc shift number. Lastly in Section 5, we extend arc shift to region arc shift and compare it with forbidden moves.

\section{Prelimenaries} L.H. Kauffman \cite{kauffman1999virtual} introduced virtual knot theory with a motivation to study knots embedded in thickened surfaces of arbitrary genus and also to provide knot theory a completely combinatorial territory dealing with Gauss diagrams and Gauss codes. A virtual knot diagram is a $4$-regular (each node having degree four) planar graph with extra structure on its nodes. The extra structure includes two types of crossings at nodes with one being the classical crossing adhering over (under) information and other one called the virtual crossing. A virtual crossing is indicated by a small circle around the node with no sense of over and under information (See Fig.~\ref{CROSSING}).
\begin{figure}[!ht]
\centering
\includegraphics[scale=.4]{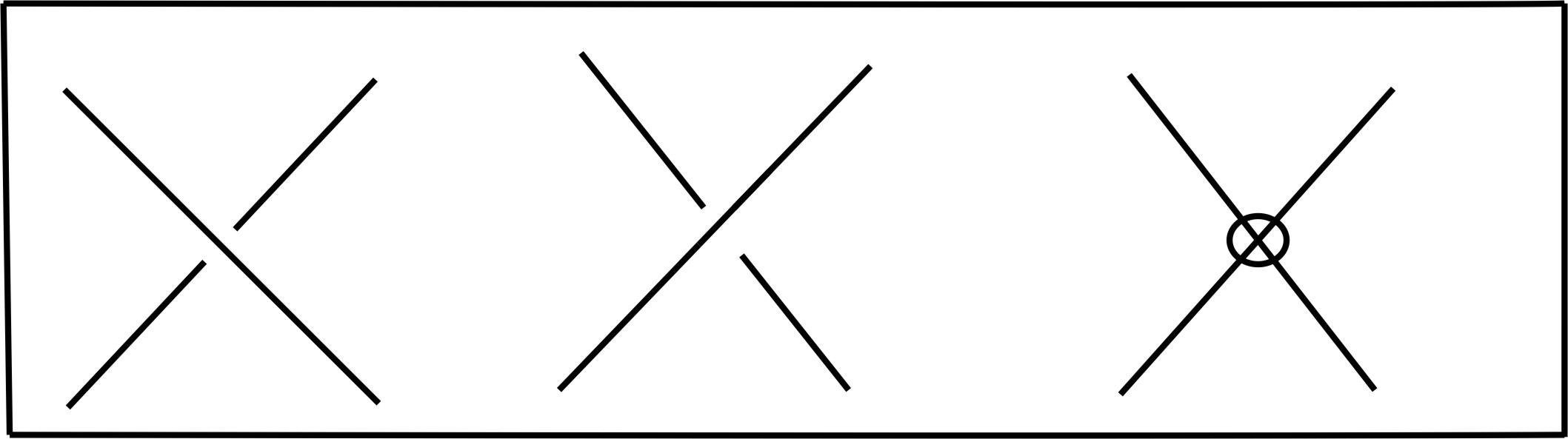}
\caption{Classical and Virtual crossings }
  \label{CROSSING}
\end{figure}

\noindent Two virtual knot diagrams are declared equivalent if they are related by a sequence of generalized Reidemeister moves depicted in Fig.~\ref{GRDMOVE}. Virtual knots are defined as equivalence classes of virtual knot diagrams modulo generalized Reidemeister moves.

\begin{figure}[H]
\centering
\subfigure
{
\includegraphics[width=6cm,height=3cm]{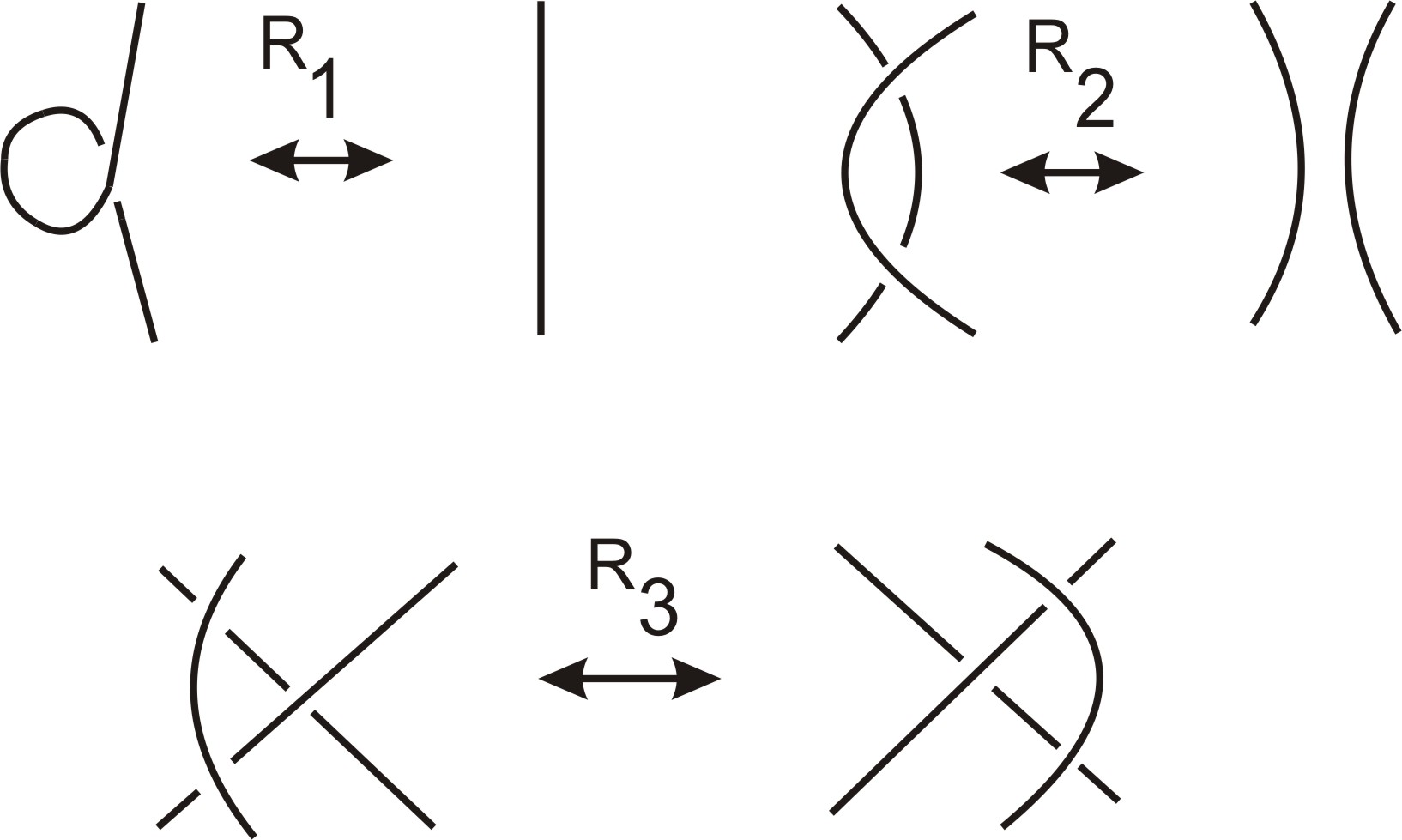}
}
\subfigure
{
\includegraphics[width=6cm,height=3cm]{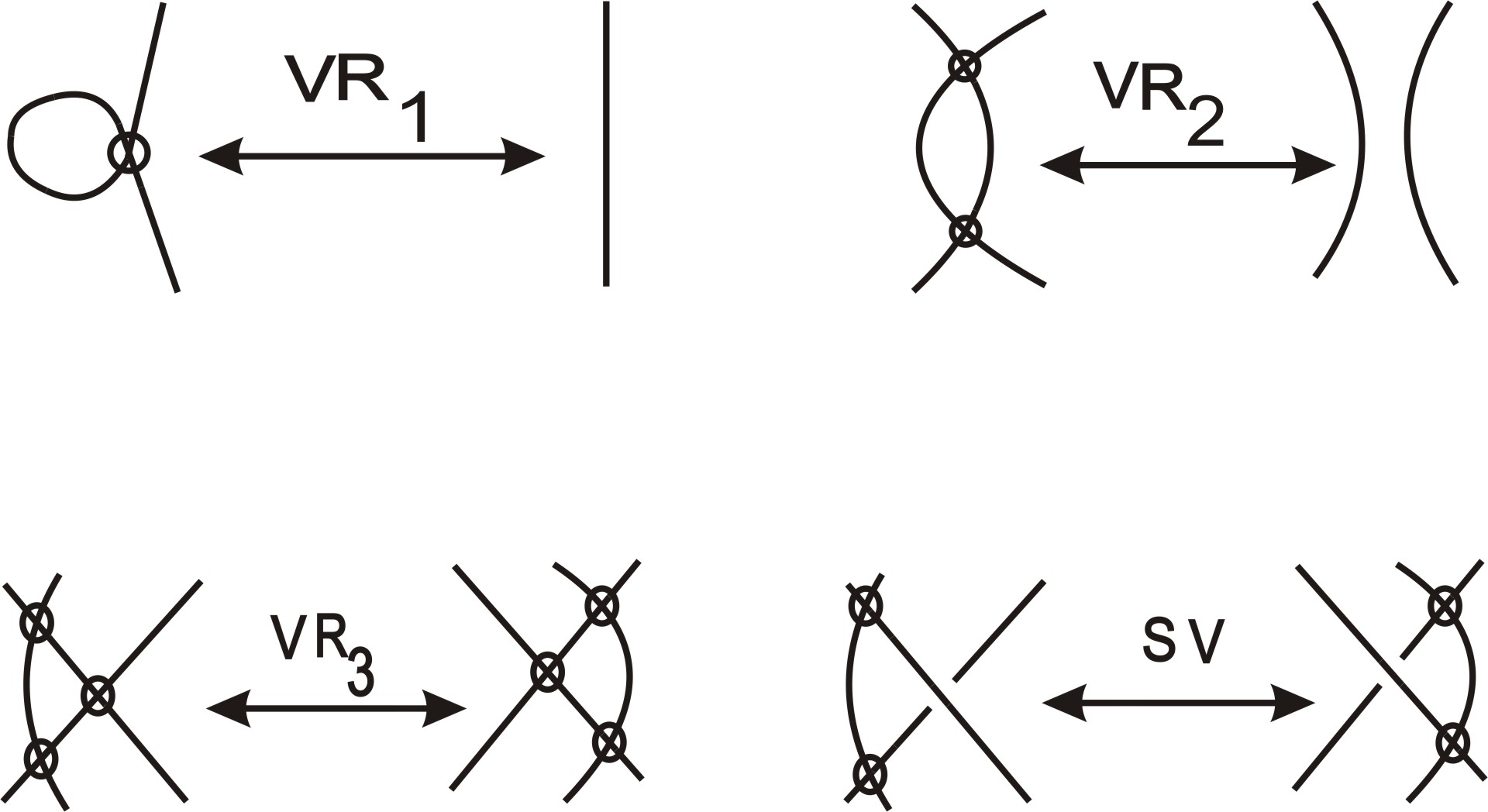}
}
\caption{Generalized Reidemeister moves}
  \label{GRDMOVE}
\end{figure}

\noindent As a consequence of virtual Reidemeister moves a segment in the virtual knot diagram consisting of virtual crossings only can be freely moved in the plane. While moving such segment of the knot, we keep the end points fixed such that all the new places it crosses the diagram transversally are marked as virtual crossings. Moving such a segment is termed as \textbf{Detour move} (Fig.~\ref{DETOUR}).
\begin{figure}[H]
\centering
  \includegraphics[width=8cm,height=2cm]{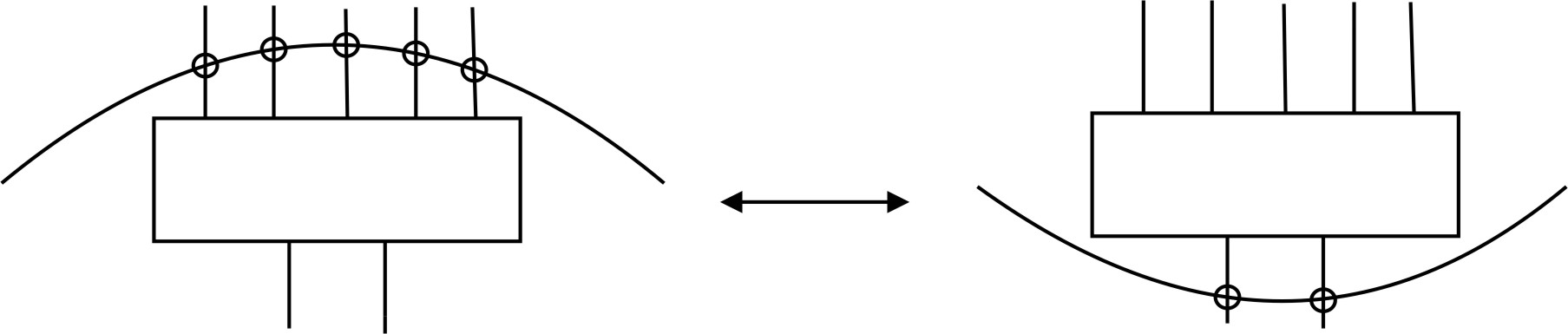}

  \caption{Detour Move }
  \label{DETOUR}
\end{figure}

\noindent Sign of a classical crossing $c$ also known as local writhe of $c$ is defined as shown in the Fig.~\ref{SIGN1}.
\begin{figure}[H]
\centering
  \includegraphics[width=6cm,height=2cm]{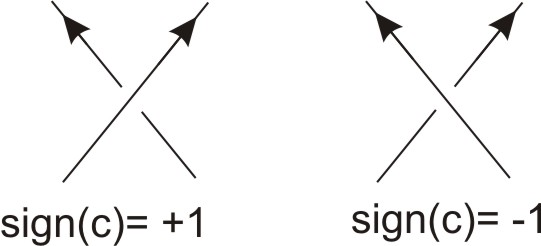}

  \caption{Local writhe or sign of a crossing }
  \label{SIGN1}
\end{figure}

\noindent In \cite{goussarov2000finite} an approach to virtual knot theory goes via Gauss diagrams.
\begin{definition}
Gauss diagram $G(D)$ corresponding to a classical(virtual) knot diagram $D$ is an oriented circle with a base point where each classical crossing is marked two times with respect to overpass and underpass. Two markings are then joined by an arrow (chord) oriented from overpass to underpass with a sign attached to each arrow equal to the local writhe of the corresponding crossing (Fig.~\ref{GDFIG82}).
\end{definition}
\begin{figure}[H]
\centering
  \includegraphics[width=7cm,height=3cm]{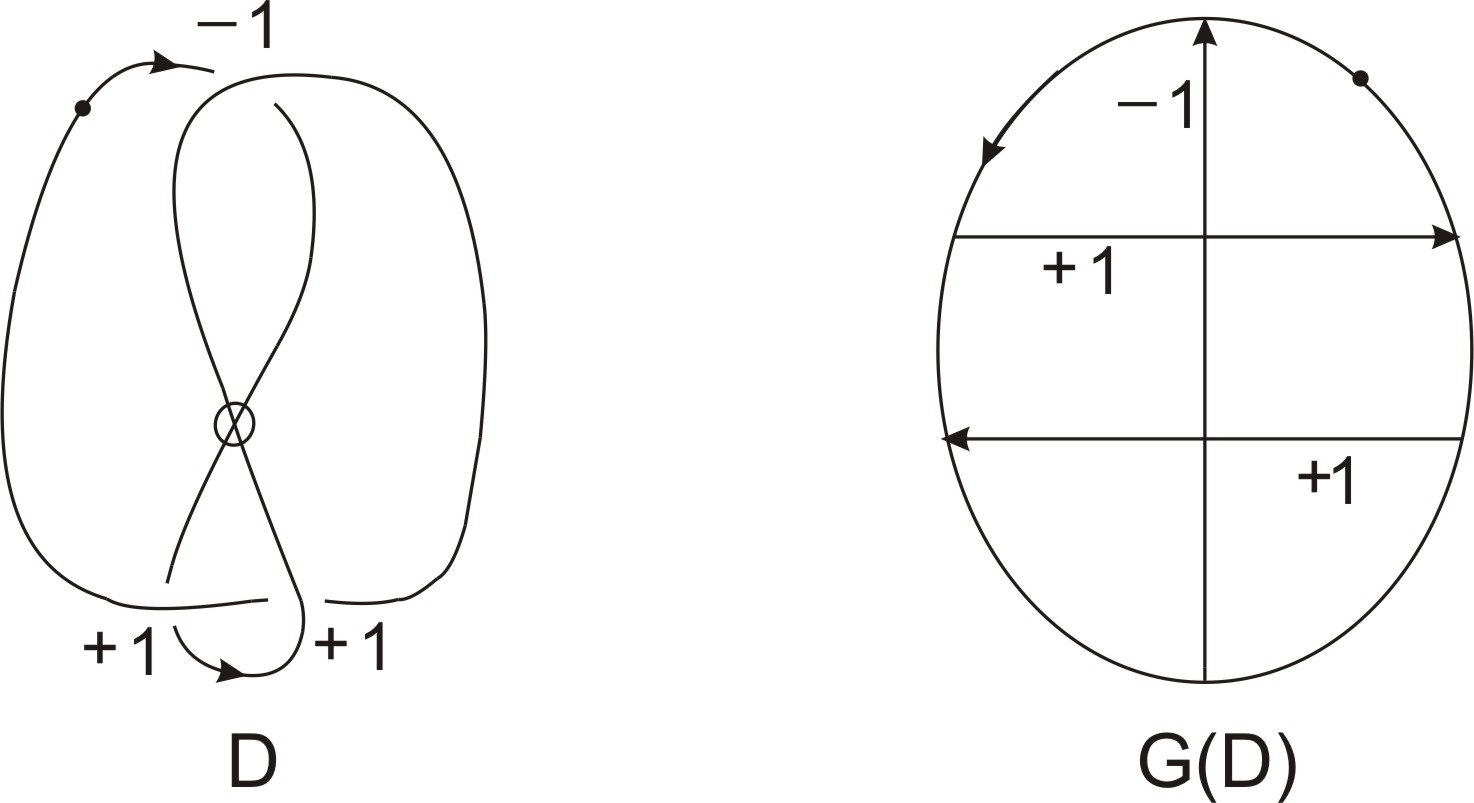}
 \caption{Gauss diagram corresponding to virtual figure eight knot }
  \label{GDFIG82}
\end{figure}
Analogous version of Reidemeister moves on Gauss diagrams are shown in Fig.~\ref{GDRMOVES1}. Virtual moves do not affect Gauss diagrams since virtual crossings are not accounted in Gauss diagrams. An alternative way to define virtual knots is by considering equivalence classes of Gauss diagrams modulo the moves shown in Fig.~\ref{GDRMOVES1}. 
\begin{figure}[H]
\centering
  \includegraphics[width=11cm,height=3.5cm]{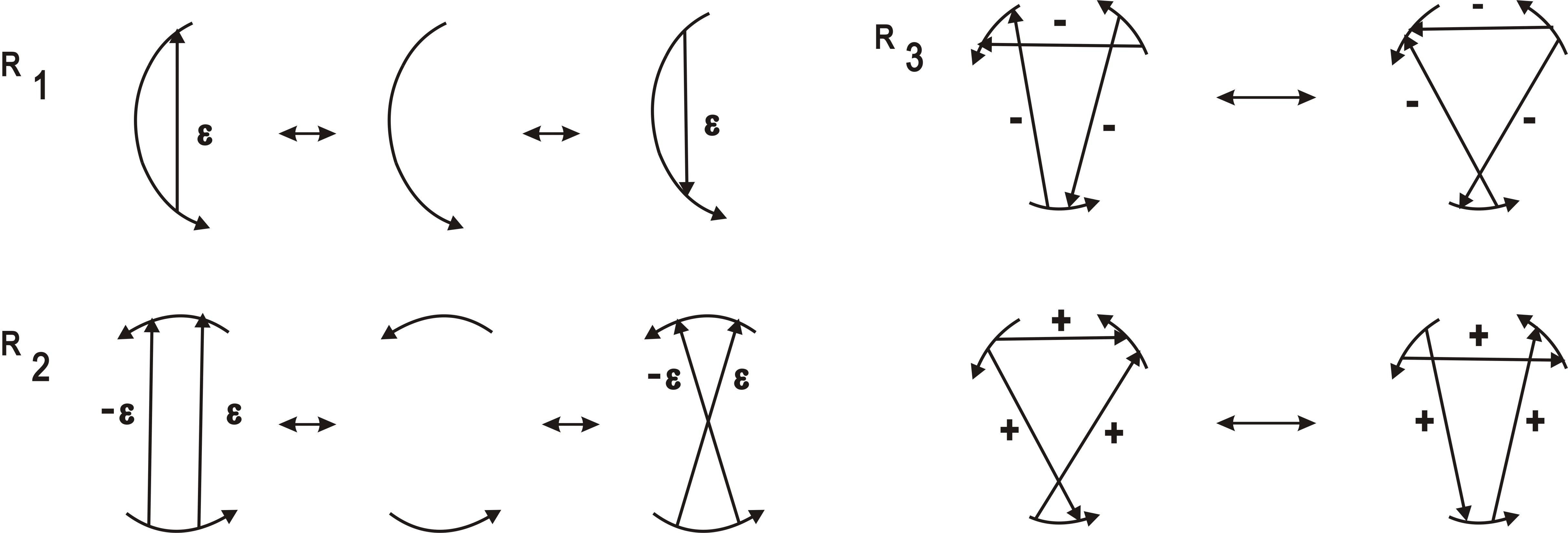}
 \caption{Reidemeister moves on Gauss diagram  }
  \label{GDRMOVES1}
\end{figure}
\noindent Two moves listed in Fig.~\ref{FORBIDDEN}($a$) are known as \textbf{forbidden moves}. Gauss diagram for forbidden moves are shown in Fig.~\ref{FORBIDDEN}($b$). Terminology inspires from the fact that if allowed, these moves will leave whole of virtual knot theory trivial, i.e., any virtual knot can be turned trivial using forbidden moves. This fact is proved in \cite{kanenobu2001forbidden,nelson2001unknotting,goussarov2000finite}.
\begin{figure}[H]
\centering
  \includegraphics[width=9cm,height=3cm]{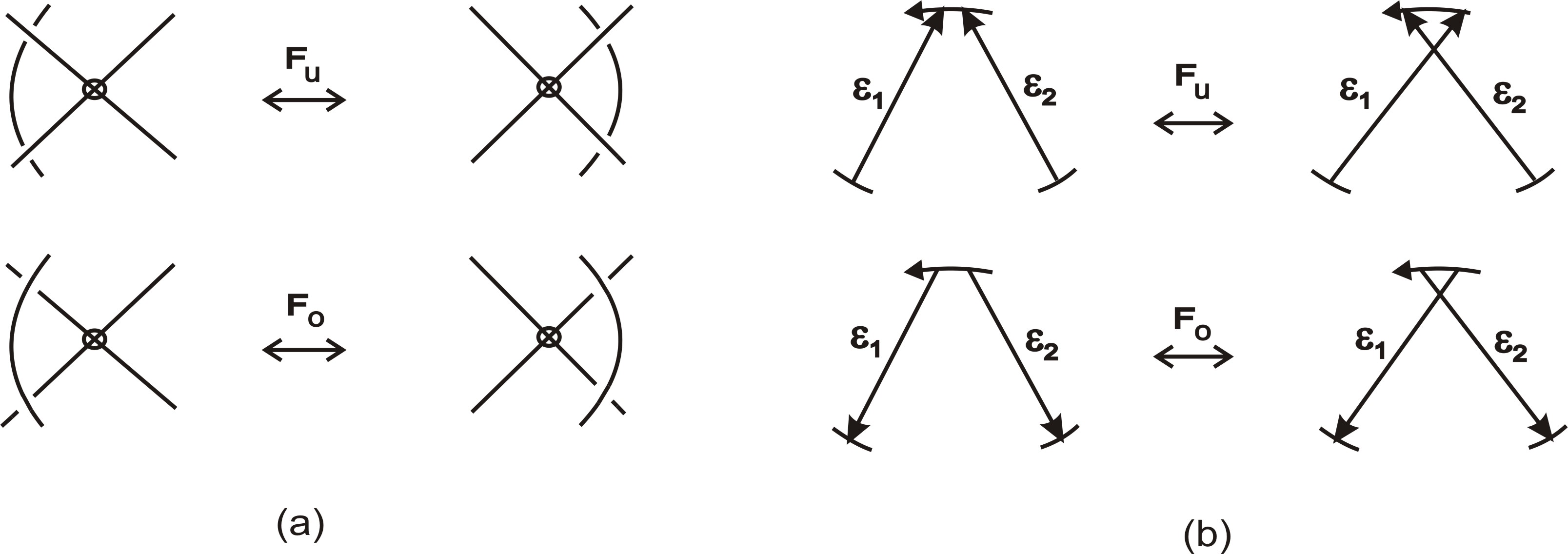}
 \caption{Forbidden moves $F_h$ and $F_t$}
  \label{FORBIDDEN}
\end{figure}
\section{Arc shift move}
\begin{definition}{\rm
In a virtual knot diagram $D$, we define an \emph{arc}, say $(a,b)$ as the segment passing through exactly one pair of crossings (classical/virtual)
($c_{1}$,$c_{2}$) with $a$ incident to $c_1$ and $b$ incident to $c_2$. In Fig.~\ref{VT}, arc $(a,b)$ passes through crossings ($c_{1}$,$c_{2}$) and arc $(e,f)$ passes through crossings ($c_{2}$,$c_{3}$). \\
\begin{figure}[H]
\centering
  \includegraphics[width=3cm,height=2.7cm]{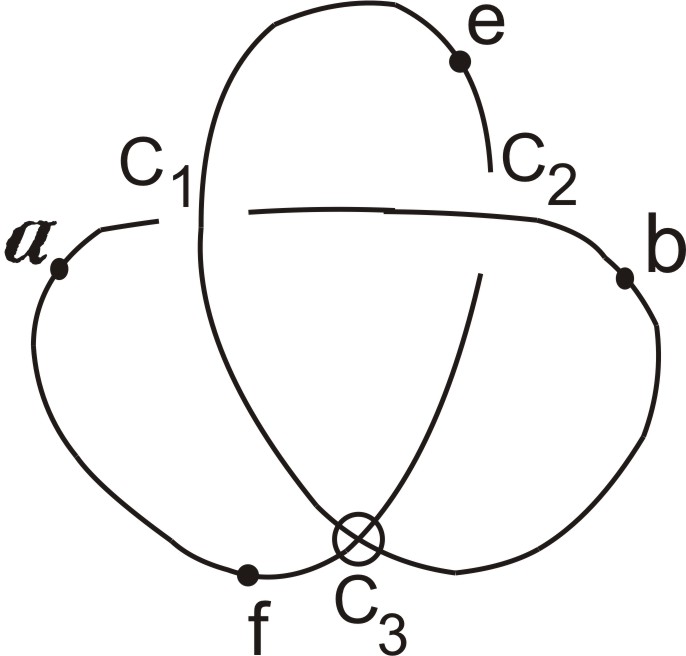}
 \caption{Arc $(a,b)$ and $(e,f)$ }
  \label{VT}
\end{figure}

}\end{definition}
\begin{definition}{\rm
Two \emph{arcs} $(a,b)$ and $(c,d)$ in a virtual knot diagram $D$ are said to be equivalent if they pass through same pair of crossings ($c_{1}$,$c_{2}$) and the segment common in both $(a,b)$ and $(c,d)$ is again an arc passing through the same pair of crossings ($c_{1}$,$c_{2}$). For example arcs $(a,b)$ and $(c,d)$ depicted in Fig.~\ref{EQARCS2} are equivalent arcs.}
\end{definition}
\begin{figure}[H]
\centering
  \includegraphics[width=3.5cm,height=3cm]{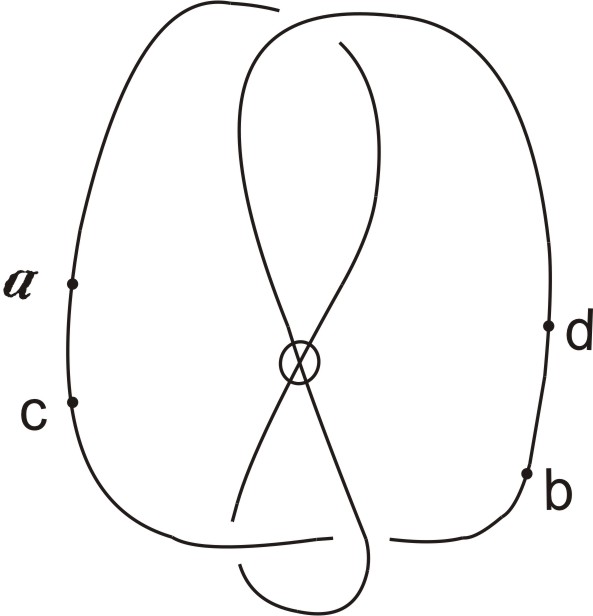}
 \caption{ Equivalent Arcs $(a,b)$ and $(c,d)$  }
  \label{EQARCS2}
\end{figure}

\noindent \textit{Arc shift move:} In a virtual knot diagram $D$, let $(a,b)$ be an arc passing through the pair of crossings ($c_{1}$,$c_{2}$). Without loss of generality assume that $c_{1}$ is classical crossing while $c_{2}$ being virtual. By arc shift move on the arc $(a,b)$, we mean cutting the arc at two points near $a$ and $b$ and identifying the loose ends on one side with loose ends on the other side in the way as shown in Fig.~\ref{ARCGLUE}. While applying the arc shift move some new crossings may arise in the diagram, we label them as virtual crossings.

\begin{figure}[H]
\centering
  \includegraphics[width=10.6cm,height=1.35cm]{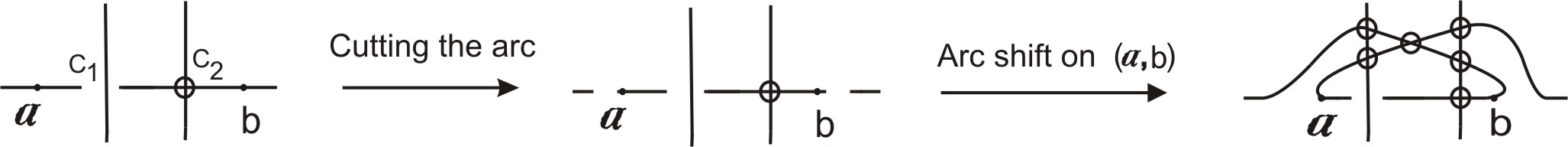}
 \caption{  Arc shift on arc $(a,b)$  }
  \label{ARCGLUE}
\end{figure}

\begin{remark}
There can be many possible ways to join the loose ends in the diagram while applying arc shift move. Therefore, there are number of diagrams corresponding to arc shift move on the arc $(a,b)$ in $D$, two such diagrams are shown in Fig.~\ref{REMARK1}. However, in all such diagrams the strands joining the loose ends contains only virtual crossings.

\begin{figure}[H]
\centering
  \includegraphics[width=8cm,height=3.2cm]{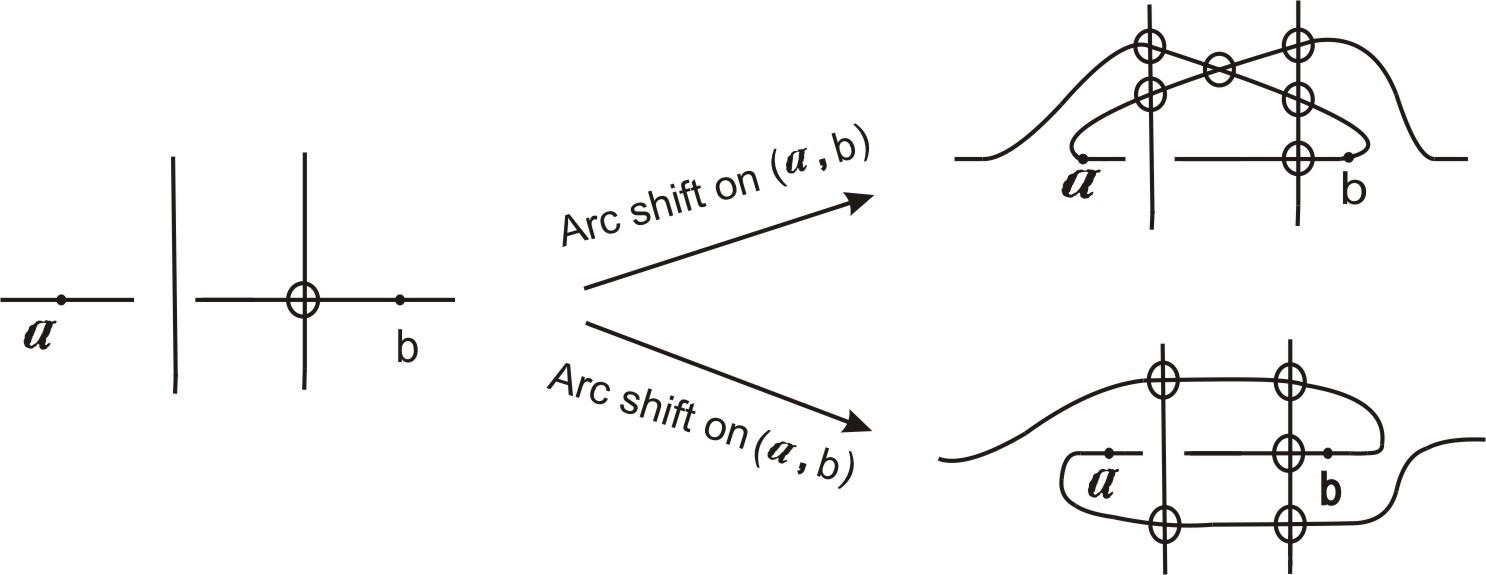}
 \caption{Equivalent diagrams corresponding to arc shift on arc $(a,b)$}  
 \label{REMARK1}
\end{figure}

Therefore, as a result of detour move (Fig.~\ref{DETOUR}) any two such diagrams are equivalent by virtual Reidemeister moves. Considering equivalence of all these diagrams, we denote the diagram obtained from $D$ as a result of arc shift on the arc $(a,b)$  by $D_{(a,b)}$.
\end{remark}

\begin{remark}
As an effect of arc shift move on the arc $(a,b)$ in an oriented virtual knot diagram $D$, orientation in the encircled region  gets reversed as shown in Fig.~\ref{AKSORREVERSE}.
\end{remark}

\begin{figure}[H]
\centering
  \includegraphics[width=9.7cm,height=2.4cm]{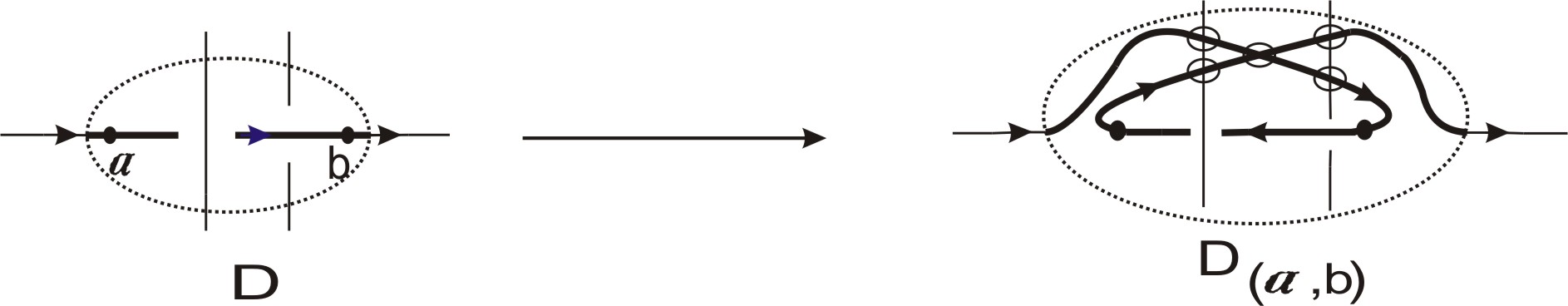}
 \caption{  }
  \label{AKSORREVERSE}
\end{figure}
Depending on the crossing type (classical/virtual) and crossing information (over/under), an arc can have different possible local configurations. However, five of these local configurations are enough to summarize effects of all possible arc shift moves in Gauss diagram. Corresponding to these five cases, we denote the respective arc shift moves by $ \boldsymbol{\bar{A}_h} $, $ \boldsymbol{\bar{A}_t} $, $ \boldsymbol{\bar{A}_{ht}} $,$ \boldsymbol{\bar{A}_{th}} $ and $ \boldsymbol{\bar{A}_s} $(see Fig.~\ref{TYPEAKS2}).

\begin{figure}[H]
\centering
  \includegraphics[width=10cm,height=8cm]{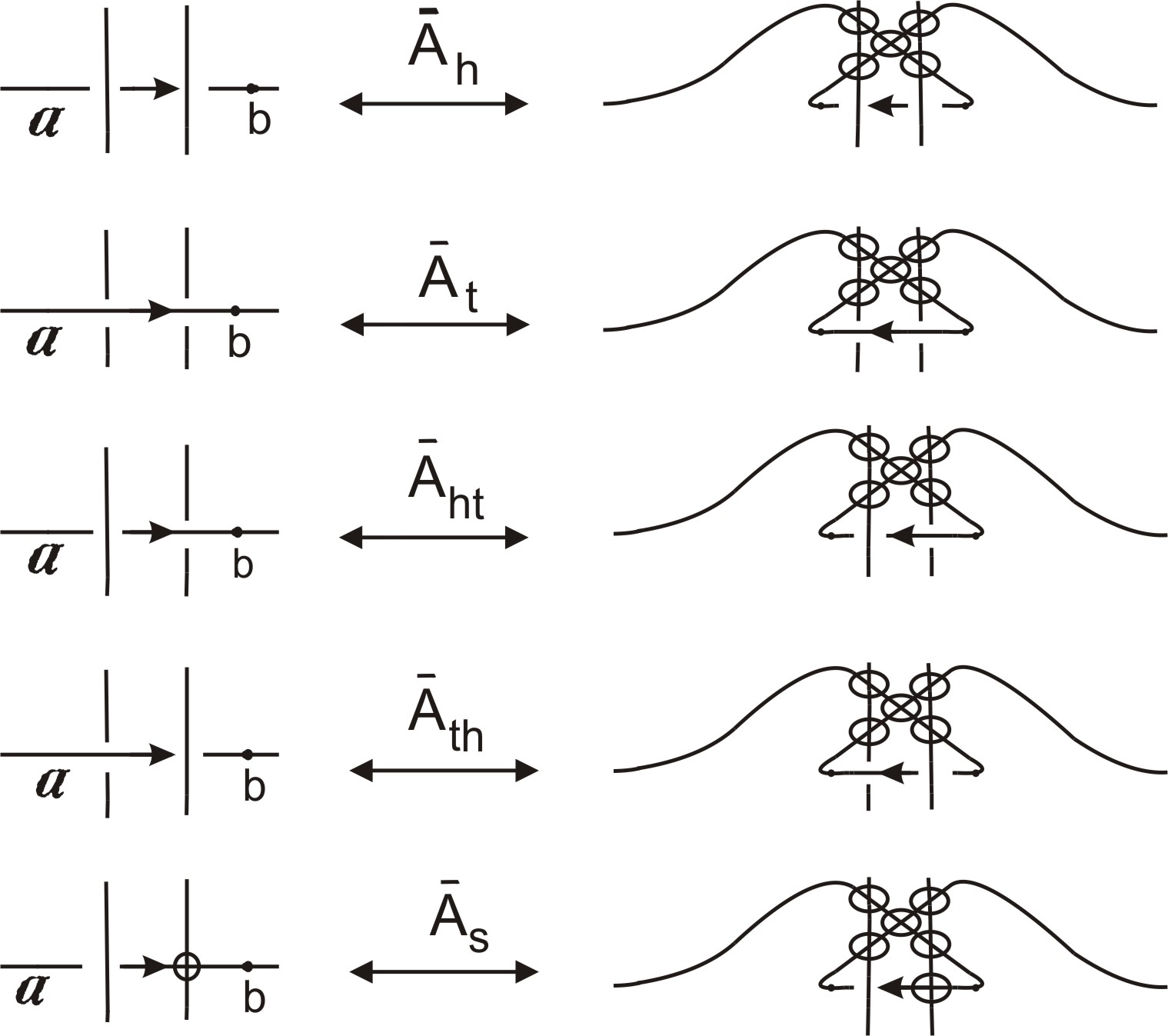}
 \caption{ arc shift move on arc $(a,b)$}
 \label{TYPEAKS2}
\end{figure}
\noindent Moves corresponding to  $ \boldsymbol{\bar{A}_h} $, $ \boldsymbol{\bar{A}_t} $, $ \boldsymbol{\bar{A}_{ht}} $,$ \boldsymbol{\bar{A}_{th}} $ and $ \boldsymbol{\bar{A}_s} $ in Gauss diagram are shown in Fig.~\ref{ARCSHIFTGD1}.

\begin{figure}[H]
\centering
  \includegraphics[width=8cm,height=9cm]{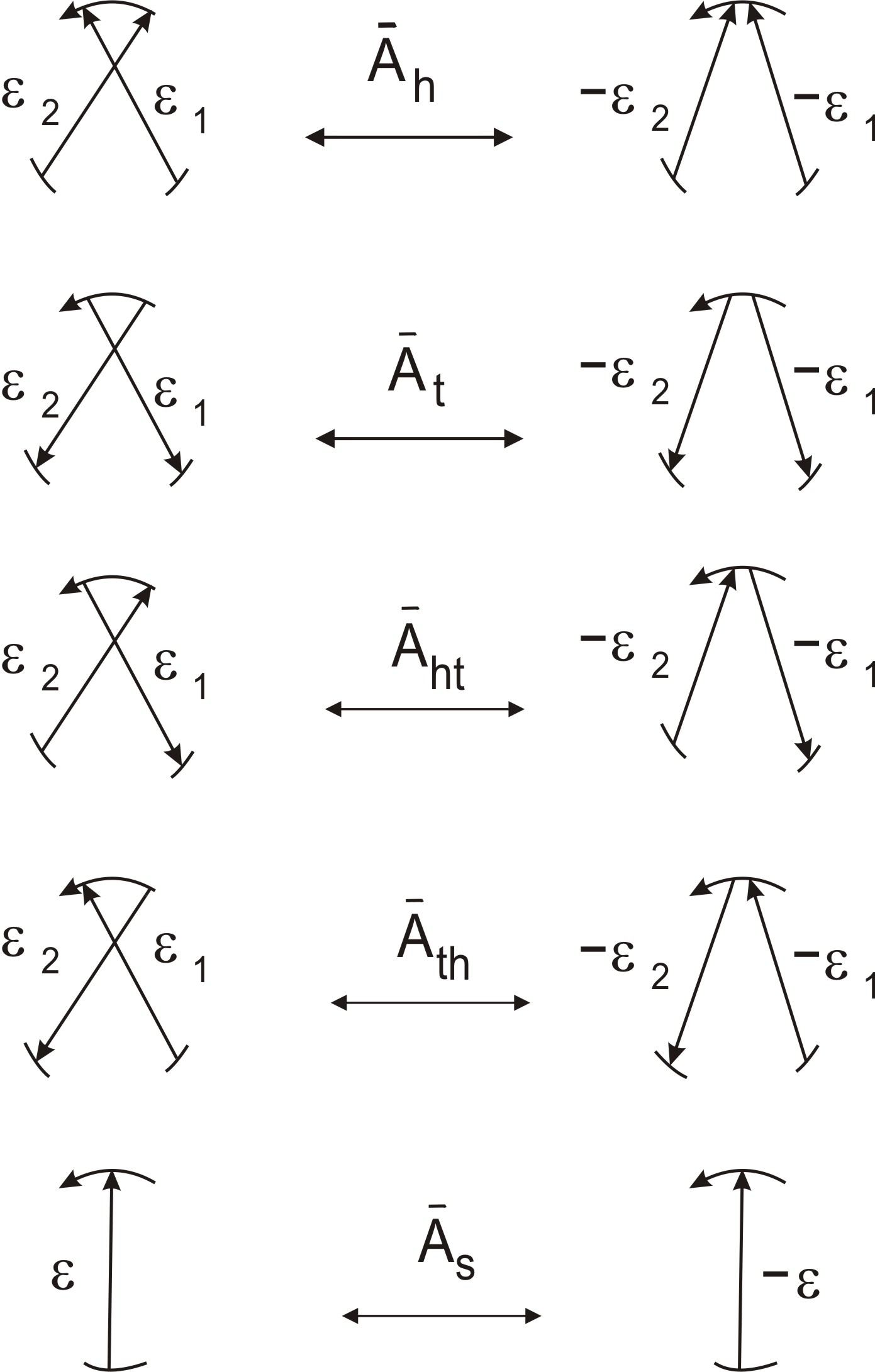}
 \caption{Gauss diagram analogues to the arc shift moves shown in Fig.~\ref{TYPEAKS2}}
  \label{ARCSHIFTGD1}
\end{figure}

\noindent Adjacent ends of two arrows in a Gauss diagram $G(D)$ may not always correspond to an arc in $D$.  There might be number of consecutive virtual crossings in $D$ between the crossings $c_{1}$, $c_{2}$ corresponding to adjacent pair of ends of the arrows. However, $D$ can be altered to an equivalent diagram $D'$ having an arc containing ($c_{1}$,$c_{2}$) as explained in the following proposition.
\begin{proposition} Let $D$ be a virtual knot diagram and ($c_{1}$,$c_{2}$) be a pair of classical crossings in $D$ such that the segment between $c_1$ and $c_2$ contains only virtual crossings. Then, there exists an virtual knot diagram $D'$ equivalent to $D$ where crossings ($c_{1}$,$c_{2}$) are contained in an arc.
\end{proposition}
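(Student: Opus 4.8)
The plan is to obtain $D'$ from $D$ by a single detour move applied to the stretch of the diagram joining $c_1$ and $c_2$. If that stretch already contains no crossing at all then $c_1$ and $c_2$ lie on an arc and there is nothing to prove, so suppose it passes through virtual crossings $v_1,\dots ,v_k$, in this order along $D$, with $k\ge 1$. First I would shorten it slightly: pick a non-crossing point $p$ on the edge from $c_1$ to $v_1$ and a non-crossing point $q$ on the edge from $v_k$ to $c_2$, and let $\gamma$ be the sub-arc of $D$ from $p$ to $q$. The interior $\mathrm{int}(\gamma)$ then meets only the virtual crossings $v_1,\dots ,v_k$, so $\gamma$ qualifies as the moving segment of a detour move, while $p$ and $q$ are ordinary points of $D$ and neither $c_1$ nor $c_2$ lies on $\gamma$.

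Write $R=\overline{D\setminus\mathrm{int}(\gamma)}$ for the rest of the diagram; it is an immersed arc with endpoints $p$ and $q$, it still carries $c_1$ and $c_2$ (each as a self-crossing, since the second strand through either of them also belongs to $R$), and it is untouched by any detour move performed on $\gamma$. The key observation is that $\mathrm{int}(\gamma)$ is, by its very definition, disjoint from $R$, so it is a connected subset of $S^{2}\setminus R$ and therefore lies inside a single complementary region $W$ of $R$. Since $\mathrm{int}(\gamma)$ runs up to both $p$ and $q$, these two points lie on $\partial W$, and they are accessible from $W$ because near each of them $R$ is locally a single arc-end. Hence $p$ can be joined to $q$ by an embedded arc $\gamma'$ with $\mathrm{int}(\gamma')\subset W$; such a $\gamma'$ meets $R$ only at its two endpoints and therefore carries no crossing.

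Replacing $\gamma$ by $\gamma'$ is a detour move — one that in fact introduces no new crossing — so $D':=R\cup\gamma'$ is equivalent to $D$. In $D'$ the crossings $c_1$ and $c_2$ are literally unchanged, and travelling along $D'$ out of $c_1$ in the direction of $p$ one meets, successively, the non-crossing point $p$, the crossing-free arc $\gamma'$, the non-crossing point $q$, and then $c_2$; so $c_1$ and $c_2$ are consecutive crossings of $D'$, which is precisely what it means for them to be contained in an arc. (One may carry this out in $S^{2}$, which supports the same equivalence relation on diagrams, to sidestep the possibility that the point at infinity lies in $W$.)

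The step deserving the most care is the region argument in the second paragraph, and the point worth stressing there is that there is no global obstruction: the segment we delete from $D$ is itself a path lying entirely off the remaining diagram, so it already witnesses $p$ and $q$ as lying on the closure of one and the same region of $S^{2}\setminus R$, and all we do is straighten it. Once that is granted, the accessibility of $p,q$ from $W$ and the verification that $c_1,c_2$ become adjacent in $D'$ are routine.
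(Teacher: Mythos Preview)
Your argument has a genuine gap at the ``key observation''. You assert that $\mathrm{int}(\gamma)$ is disjoint from $R$, but this is false as soon as $k\ge 1$: at each virtual crossing $v_i$ the arc $\gamma$ meets the \emph{other} strand through $v_i$, and that other strand belongs to $R$. Virtual crossings carry no over/under data, but they are still honest transverse intersections of the underlying planar curve, so $\mathrm{int}(\gamma)\cap R\supseteq\{v_1,\dots,v_k\}\ne\emptyset$. The same error recurs in your last paragraph when you say the deleted segment ``lies entirely off the remaining diagram''.

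Once that claim falls, so does the conclusion that $p$ and $q$ lie on the boundary of a single complementary region $W$ of $R$, and in fact this conclusion can fail. For a schematic counterexample, let $R$ run from $p$ along a short tail to a point $x$, once around a circle $C$ back to $x$ (a self-crossing there), and then along a tail into the interior of $C$ ending at $q$; add small kinks near $p$ and $q$ to supply the classical self-crossings $c_1,c_2$. Then $p$ is only accessible from the exterior of $C$ and $q$ only from the interior, so no crossing-free $\gamma'$ from $p$ to $q$ exists. Any $\gamma$ compatible with this $R$ must cross $C$, and that intersection is exactly a virtual crossing, matching the hypotheses. Thus a single detour of $\gamma$ cannot in general produce an arc with no crossings between $c_1$ and $c_2$.

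The paper avoids this obstruction by detouring not $\gamma$ but the \emph{transverse} strands: for each $v_i$ it reroutes the other strand through $v_i$ so that it no longer meets the segment between $c_1$ and $c_2$. This is a local operation performed $k$ times, and it does not require $p$ and $q$ to share a complementary region of anything. If you want to keep your single-move viewpoint, you would need an argument (or an inductive reduction) that genuinely handles the crossings of $\gamma$ with $R$; as written, the proof does not.
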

\begin{proof}
Consider the diagram $D$ and apply Detour move on each of the vertical segments containing virtual crossings as shown in Fig.~\ref{PROP1}. After applying Detour move finite number of times in $D$, the segment between $c_1$ and $c_2$ becomes free of virtual crossings. Diagram $D'$ obtained as a result is equivalent to $D$ and arc $(a,b)$ in $D'$ contains crossings ($c_{1}$,$c_{2}$) as required in the proposition.
\end{proof}

\begin{figure}[H]
\centering
  \includegraphics[width=10cm,height=3cm]{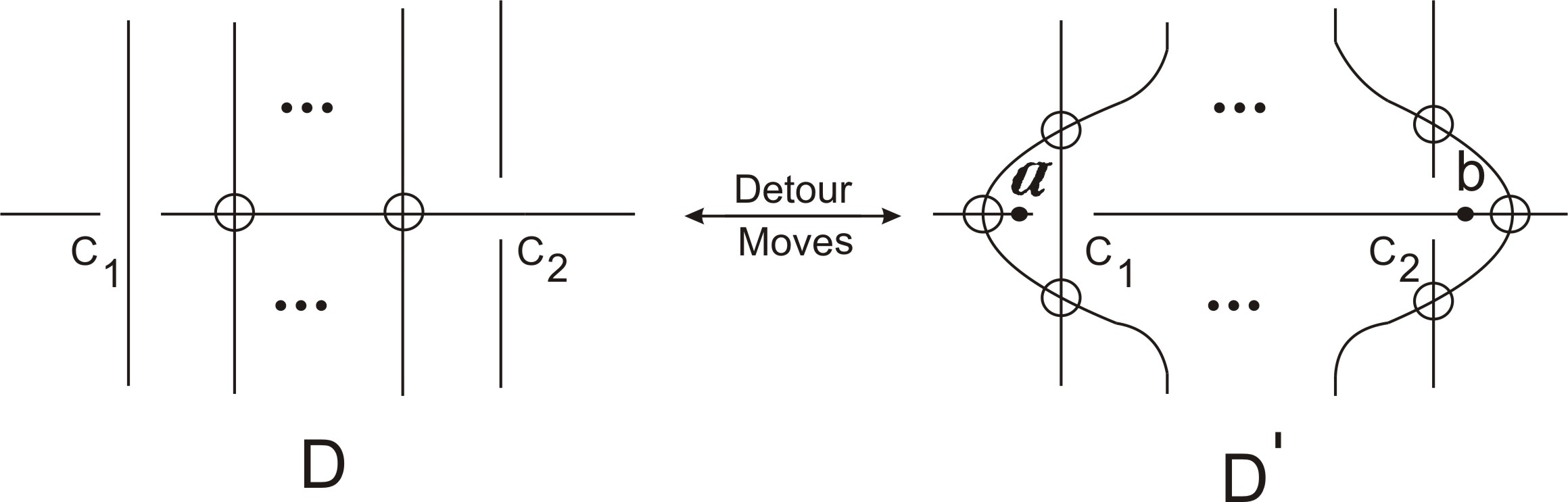}
 \caption{}
  \label{PROP1}
\end{figure}

\noindent As a consequence of proposition 1, we have following theorem, which tells that to check whether two virtual knot diagrams are related by arc shift moves and virtual Reidemeister moves, it is enough to check the equivalence of corresponding Gauss diagrams by moves given in Fig.~\ref{ARCSHIFTGD1}. 
\begin{theorem}
Let $G(D_1)$ and $G(D_2)$ be Gauss diagrams related by a finite sequence of diagrammatic moves $ \boldsymbol{\bar{A}_h} $,$ \boldsymbol{\bar{A}_t} $,$ \boldsymbol{\bar{A}_{ht}} $ and $ \boldsymbol{\bar{A}_{th}}$ given in Fig.~\ref{ARCSHIFTGD1}. Then, corresponding virtual knot diagrams $D_1$ and $D_2$ can be obtained from each other by respective arc shift moves and virtual Reidemeister moves.
\end{theorem}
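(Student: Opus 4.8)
The plan is to set up a correspondence between the diagrammatic Gauss-diagram moves $\bar{A}_h$, $\bar{A}_t$, $\bar{A}_{ht}$, $\bar{A}_{th}$ and the arc shift moves on actual virtual knot diagrams, and then push a sequence through this correspondence one move at a time. First I would observe that, given any Gauss diagram $G$, one can realize $G$ as $G(D)$ for some virtual knot diagram $D$ (this is the standard realizability of Gauss diagrams for virtual, as opposed to classical, knots). So both $G(D_1)$ and $G(D_2)$ come from honest diagrams, and the content of the theorem is entirely about how a single Gauss-diagram move lifts.

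The heart of the argument is the inductive step: suppose $G(D_1)$ and $G(D')$ differ by exactly one of the four moves in Fig.~\ref{ARCSHIFTGD1}, applied at a pair of arrow-ends that are adjacent on the circle. I would first invoke Proposition 1: the pair of classical crossings $(c_1,c_2)$ corresponding to those adjacent arrow-ends can, after a finite number of Detour moves (hence virtual Reidemeister moves), be made to lie on a single arc $(a,b)$ in a diagram $D_1'$ equivalent to $D_1$ with $G(D_1')=G(D_1)$. Now I perform the arc shift move on the arc $(a,b)$ in $D_1'$; by the Remark, the resulting diagram $D_{(a,b)}'$ is well-defined up to virtual Reidemeister moves (all the ambiguity in reconnecting loose ends is absorbed by the Detour move), and its Gauss diagram is, by the case analysis summarized in Fig.~\ref{TYPEAKS2} and Fig.~\ref{ARCSHIFTGD1}, exactly the Gauss diagram obtained from $G(D_1')$ by the corresponding move — namely $G(D')$. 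Hence $D_{(a,b)}'$ is a virtual knot diagram obtained from $D_1$ by one arc shift move together with virtual Reidemeister moves, and $G(D_{(a,b)}')=G(D')$.

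Finally I would iterate. Writing the given finite sequence of Gauss-diagram moves as $G(D_1)=G_0 \to G_1 \to \cdots \to G_n = G(D_2)$, I apply the inductive step repeatedly: having produced a diagram $E_k$ obtained from $D_1$ by $k$ arc shifts and some virtual moves with $G(E_k)=G_k$, I use Proposition 1 to normalize $E_k$ and then perform the arc shift realizing $G_k\to G_{k+1}$, yielding $E_{k+1}$ with $G(E_{k+1})=G_{k+1}$. At the end, $E_n$ is obtained from $D_1$ by arc shift moves and virtual Reidemeister moves and satisfies $G(E_n)=G(D_2)$. Since virtual moves do not change Gauss diagrams and two virtual knot diagrams with the same Gauss diagram are related by virtual Reidemeister moves (detour moves), $E_n$ is equivalent to $D_2$ by virtual Reidemeister moves, completing the proof.

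The main obstacle I anticipate is the bookkeeping in the inductive step: making sure that the Detour moves supplied by Proposition 1, which reposition the virtual strands so that $(c_1,c_2)$ lie on a common arc, genuinely do not disturb the rest of the diagram in a way that obstructs the next arc shift, and verifying that the five local configurations of Fig.~\ref{TYPEAKS2} really do exhaust all over/under and classical/virtual possibilities for the crossing $c_2$ so that the Gauss-diagram effect always matches one of the listed moves. Once the exhaustiveness of the case analysis is granted (as it is in the discussion preceding the theorem) and Proposition 1 is applied, the iteration is routine.
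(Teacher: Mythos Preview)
Your proposal is correct and follows essentially the same approach as the paper: invoke Proposition~1 to place the two relevant crossings on a common arc via virtual Reidemeister moves, apply the arc shift realizing the given Gauss-diagram move, and iterate along the finite sequence. If anything, you are slightly more careful than the paper in the final step, explicitly noting that the terminal diagram $E_n$ has $G(E_n)=G(D_2)$ and hence is related to $D_2$ by virtual (detour) moves, whereas the paper simply asserts that one ``gets'' $D_2$ at the end.
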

\begin{proof}
Choose first move in the sequence relating $G(D_1)$ and $G(D_2)$. Consider the classical crossings $c_1$ and $c_2$ in $D_1$ corresponding to pair of adjacent arrows affected by first move in Gauss diagram $G(D_1)$. Proposition 1 guarantees existence of a diagram $D'$ equivalent to $D_1$ by virtual Reidemeister moves such that crossings ($c_{1}$,$c_{2}$) are contained in an arc $(a,b)$ in $D'$. Both $D_1$ and $D'$ being equivalent by virtual Reidemeister moves have identical Gauss diagrams, therefore, arc shift move on the arc $(a,b)$ in $D'$ results in first move chosen from the sequence. Virtual knot diagram so obtained from $D'$ is related to $D_1$ by one arc shift move and virtual Reidemeister moves. Similarly the process continues and for the last move in the sequence we get the virtual knot diagram $D_2$ corresponding to $G(D_2)$ and related to $D_1$ via arc shift moves and virtual Reidemeister moves.
\end{proof}

\noindent Virtual knot diagram $D_{(a,b)}$ contains an arc containing same pair of crossings ($c_{1}$,$c_{2}$) as contained by arc $(a,b)$ in $D$. Applying arc shift move again on the corresponding arc in $D_{(a,b)}$ results in a diagram equivalent to original diagram $D$ as we discuss in the following proposition.
\begin{proposition}
Let $D$ be a virtual knot diagram and $D'$ is obtained from $D$ by applying arc shift move twice on an arc $(a,b)$. For the resulting diagram we have $D'\sim D$.
\end{proposition}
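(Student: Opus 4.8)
The plan is to show that applying the arc shift move twice on the same arc returns the diagram, up to virtual Reidemeister moves, to its original form. Since an arc shift move can be encoded entirely in the Gauss diagram (by Proposition 1 and Theorem 3.2, after possibly replacing $D$ by an equivalent diagram in which the relevant pair of crossings lies on a genuine arc), it suffices to verify the claim at the level of Gauss diagrams, i.e.\ to check that each of the diagrammatic moves $\boldsymbol{\bar{A}_h}$, $\boldsymbol{\bar{A}_t}$, $\boldsymbol{\bar{A}_{ht}}$, $\boldsymbol{\bar{A}_{th}}$, and $\boldsymbol{\bar{A}_s}$ from Fig.~\ref{ARCSHIFTGD1} is an involution on Gauss diagrams.

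First I would reduce to the Gauss diagram level: by Proposition 1, choose $D$ so that the pair of crossings $(c_1,c_2)$ defining the arc $(a,b)$ is contained in an arc with no intervening virtual crossings; the first arc shift produces $D_{(a,b)}$, which (as noted just before the proposition) again contains an arc through the same pair $(c_1,c_2)$, and the second arc shift is performed on that arc. Then I would go case by case through the five local configurations. In each case the arc shift move on the Gauss diagram consists of detaching the two arrow-endpoints adjacent to the arc and re-attaching them on the opposite side; doing this a second time detaches and re-attaches the very same endpoints, and one checks directly from Fig.~\ref{ARCSHIFTGD1} that the arrowheads/arrowtails return to their original cyclic positions with their original signs. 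Hence $G(D') = G(D)$, and so $D' \sim D$ by the generalized Reidemeister moves (virtual moves suffice, since the Gauss diagrams literally coincide). For the configuration $\boldsymbol{\bar{A}_s}$, which is not covered by Theorem 3.2, I would argue directly on the virtual knot diagram using the Remark on orientation reversal: a single arc shift reverses the orientation of the encircled region and swaps the two strands near $a$ and $b$ via virtual crossings; applying it again reverses the orientation back and undoes the strand swap by a detour move, recovering $D$ up to virtual Reidemeister moves.

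The main obstacle is the bookkeeping in the diagram-level argument, particularly for $\boldsymbol{\bar{A}_s}$: the first arc shift may introduce several new virtual crossings on the re-glued strands, and I must be careful that the second arc shift is applied to an arc that is genuinely ``the same'' arc (same pair of crossings $(c_1,c_2)$) and that the newly created virtual crossings are exactly cancelled — not merely cancellable in principle — by the detour moves available after the second shift. I expect this to go through cleanly because the arc shift is, by construction, a purely local re-gluing of two loose ends, and performing the identical re-gluing twice is the identity on the relevant local tangle; the surrounding virtual crossings are inert and can be absorbed by detour moves as in Proposition 1. The remaining cases $\boldsymbol{\bar{A}_h}, \boldsymbol{\bar{A}_t}, \boldsymbol{\bar{A}_{ht}}, \boldsymbol{\bar{A}_{th}}$ are then immediate from the observation that each is an involution on chord diagrams, together with Theorem 3.2.
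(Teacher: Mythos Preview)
Your approach is correct and is genuinely different from the paper's. The paper argues directly on the planar diagram: it fixes one configuration of the pair $(c_1,c_2)$, draws the result of two successive arc shifts, and then explicitly cancels the newly created virtual crossings with several $VR_2$ moves; the remaining configurations are dismissed as ``similar.'' You instead pass to Gauss diagrams and observe that each of the five local moves in Fig.~\ref{ARCSHIFTGD1} is an involution (swap two adjacent ends and negate both signs, or just negate one sign), so $G(D')=G(D)$ and hence $D'\sim D$ since virtual knot diagrams with identical Gauss diagrams differ only by virtual Reidemeister moves. Your route is cleaner and handles all five cases uniformly, whereas the paper's route is more hands-on and makes the $VR_2$ cancellations visible. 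Two small points: you do not actually need Theorem~3.1 (you wrote 3.2) at all---that theorem goes from Gauss-diagram moves to diagram moves, but what you use is simply that equal Gauss diagrams give equivalent virtual diagrams; and your separate treatment of $\boldsymbol{\bar{A}_s}$ is unnecessary, since on the Gauss diagram it is just a single sign flip and hence an involution for the same reason as the other four.
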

\begin{proof}
Let $(a,b)$ be an arc in $D$ passing through the pair of crossings ($c_{1}$,$c_{2}$). For convenience assume that both the crossings $(c_1,c_2)$ are classical having crossing information as shown in Fig.~\ref{PROPOS2}(1). We obtain Fig.~\ref{PROPOS2}(2) by applying arc shift on the arc $(a,b)$ in Fig.~\ref{PROPOS2}(1). Again, applying arc shift in Fig.~\ref{PROPOS2}(2) results in the diagram $D'$ shown in Fig.~\ref{PROPOS2}(3) where if we apply $VR_2$ move in each of the two encircled regions we get Fig.~\ref{PROPOS2}(4). In Fig.~\ref{PROPOS2}(4), if we apply three $VR_2$ moves in the encircled region, we obtain Fig.~\ref{PROPOS2}(5) which is identical to diagram $D$ we started with, i.e , Fig.~\ref{PROPOS2}(1). Thus, $D'$ is equivalent to $D$ as required.\\
Similarly all the other cases involving $(c_1,c_2)$ having different crossing type (classical/virtual) and crossing information (over/under) follows.
\end{proof} 
\begin{figure}[H]
\centering
  \includegraphics[width=11cm,height=6cm]{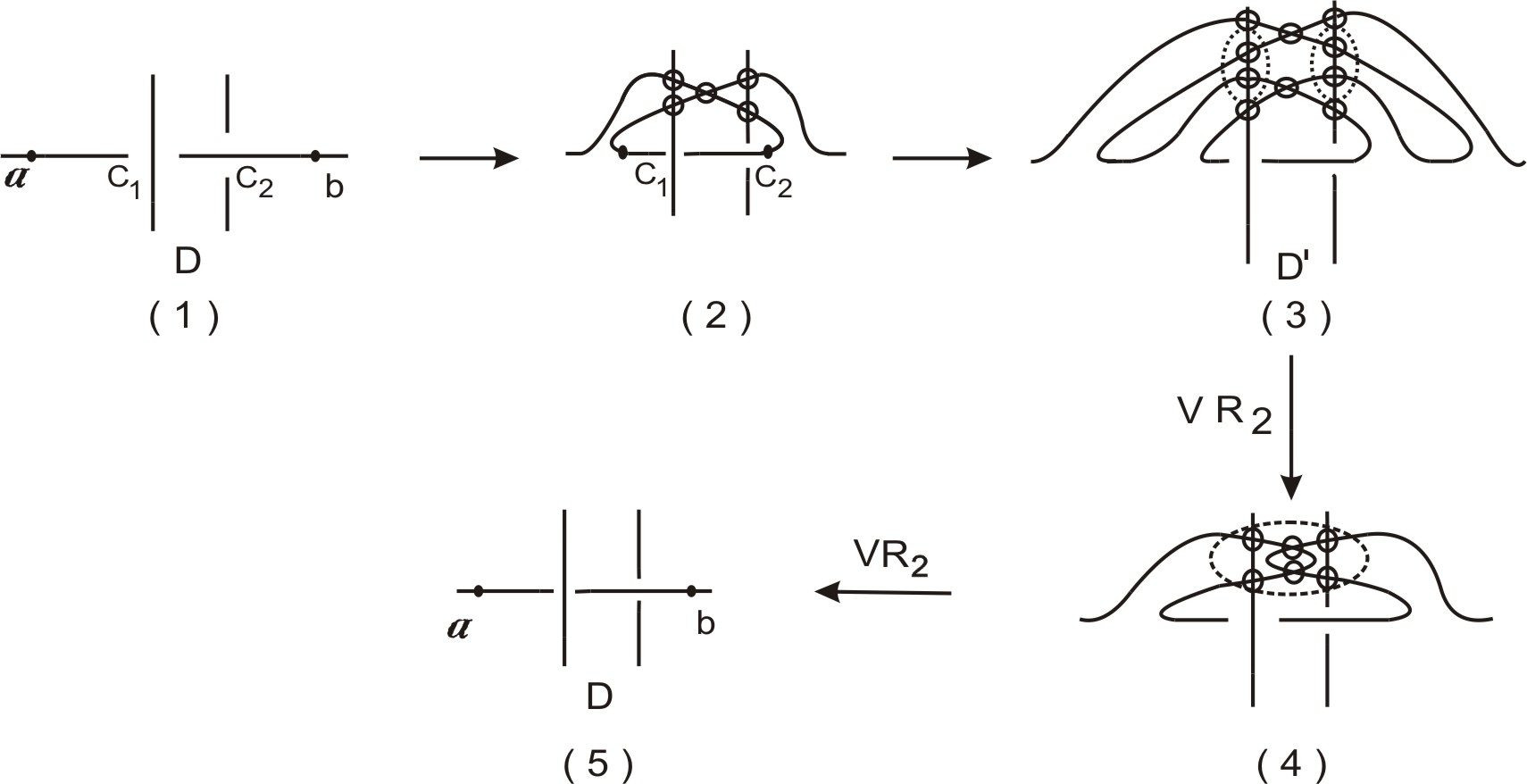}
 \caption{}
  \label{PROPOS2}
\end{figure}

\noindent By applying single arc shift move in equivalent diagram of an oriented virtual knot diagram we can realize switch in the sign of any crossing $c$(without changing the crossing information). While doing this, all the other crossings remain unaffected as discussed in the following proposition.

\begin{proposition}Let $D$ be a virtual knot diagram and $c$ be any crossing in $D$. Then, there exists a diagram $D'$ obtained from $D$ by applying an arc shift move such that the crossing $c'$ in $D'$ corresponding to $c$ is of opposite sign, i.e., sign($c'$)= \begin{large}
-
\end{large} sign($c$).
\end{proposition}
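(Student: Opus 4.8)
The plan is to flip the sign of $c$ by reversing the orientation of a very short sub-arc of the knot that passes only through $c$ and through one auxiliary virtual crossing: a single arc shift move realises exactly such a reversal, and since virtual crossings carry no sign, no crossing other than $c$ is disturbed. Concretely, I would first apply a virtual Reidemeister move adding a single virtual crossing (a virtual curl) at one of the strand-germs of $c=c_1$, creating a virtual crossing $c_2$ immediately next to $c$; the resulting diagram $D_1$ is equivalent to $D$, and now $c_1$ and $c_2$ lie on a common arc $(a,b)$ with $a$ incident to $c_1$, $b$ incident to $c_2$, and with no crossing other than $c_1,c_2$ in the interior of $(a,b)$. (If $D$ already has a virtual crossing adjacent to $c$ one can instead invoke Proposition~1 to obtain such an arc without the curl.)

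Next I would set $D':=(D_1)_{(a,b)}$, so that $D'$ is obtained from $D$ by virtual Reidemeister moves together with one arc shift, and analyse what this move does. By definition it cuts the knot just beyond $c_1$ (on the side away from $c_2$) and just beyond $c_2$ and reconnects the four loose ends in the crosswise fashion of Fig.~\ref{ARCGLUE}, the reconnecting strands meeting the rest of the diagram only in virtual crossings. Following the orientation around the reconnected circle, the long complementary arc — everything outside $(a,b)$ — is now traversed backwards, so up to a global orientation reversal (which changes no signs) the net effect on the underlying oriented curve is precisely to reverse the orientation of the short sub-arc $\alpha$ through $c_1$ and $c_2$; this is the orientation reversal recorded in Fig.~\ref{AKSORREVERSE}. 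No over/under data is altered, so $c_1$ survives in $D'$ as a classical crossing $c'$ with the same crossing information, and the only new crossings are virtual.

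The proof then finishes with a sign count. At any classical crossing, reversing exactly one of its two strands changes the local writhe, while reversing both (or neither) leaves it fixed. The reversed sub-arc $\alpha$ contains exactly one passage through $c$, the one lying on $(a,b)$, and — because the interior of $(a,b)$ meets no crossing besides $c_1,c_2$ — it contains no passage through any other classical crossing; the only other crossing it meets, $c_2$, is virtual and signless. Hence $\mathrm{sign}(c')=-\,\mathrm{sign}(c)$, while every classical crossing of $D'$ other than $c'$ matches a classical crossing of $D$ with the same crossing information and sign, so $D'$ is the diagram required.

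The hard part will be the assertion used above: that the crosswise reconnection of the four loose ends, once the new strands are tidied up by detour moves, has \emph{exactly} the effect ``reverse $\alpha$ and change nothing else'' — in particular that it disturbs neither the over/under datum at $c$ nor any other classical crossing. This must be verified against the local pictures in each of the configurations $\bar A_h,\bar A_t,\bar A_{ht},\bar A_{th}$ of Fig.~\ref{TYPEAKS2} (both over/under choices at $c$, both orientations of $(a,b)$); the verification is uniform in spirit but has to be carried out case by case. The curl/detour reduction of the first step and the writhe bookkeeping of the last step are routine by comparison.
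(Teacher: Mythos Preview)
Your approach is essentially identical to the paper's: apply a $VR_1$ move to place a virtual crossing adjacent to $c$, perform the arc shift on the resulting arc (this is precisely the move $\boldsymbol{\bar{A}_s}$, not the four two--classical--crossing configurations $\boldsymbol{\bar{A}_h},\boldsymbol{\bar{A}_t},\boldsymbol{\bar{A}_{ht}},\boldsymbol{\bar{A}_{th}}$ you mention at the end, since $c_2$ is virtual), and tidy up with detour moves. The paper's proof is simply the picture version of your argument, and your orientation-reversal bookkeeping is exactly the content of Remark~2 (Fig.~\ref{AKSORREVERSE}).
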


\begin{proof}
Consider any crossing $c$ in $D$ as shown in Fig.~\ref{SIGNCHANGE}(1). Now, first apply $VR_1$ move in Fig.~\ref{SIGNCHANGE}(1) that results in Fig.~\ref{SIGNCHANGE}(2) which has an arc $(b,d)$ containing crossing $c$ and a virtual crossing. Apply an arc shift move on the arc $(b,d)$ in Fig.~\ref{SIGNCHANGE}(2) to obtain Fig.~\ref{SIGNCHANGE}(3) where the segment from $b$ to $e$ contains only virtual crossings. Using Detour move in Fig.~\ref{SIGNCHANGE}(3) we get diagram $D'$ in Fig.~\ref{SIGNCHANGE}(4) where sign($c'$)= - sign($c$) as required. Sign of other crossings remains unchanged as all the other crossings in $D$ and $D'$ have same local orientation.
\end{proof} 
\begin{figure}[H]
\centering
  \includegraphics[width=10cm,height=5.5cm]{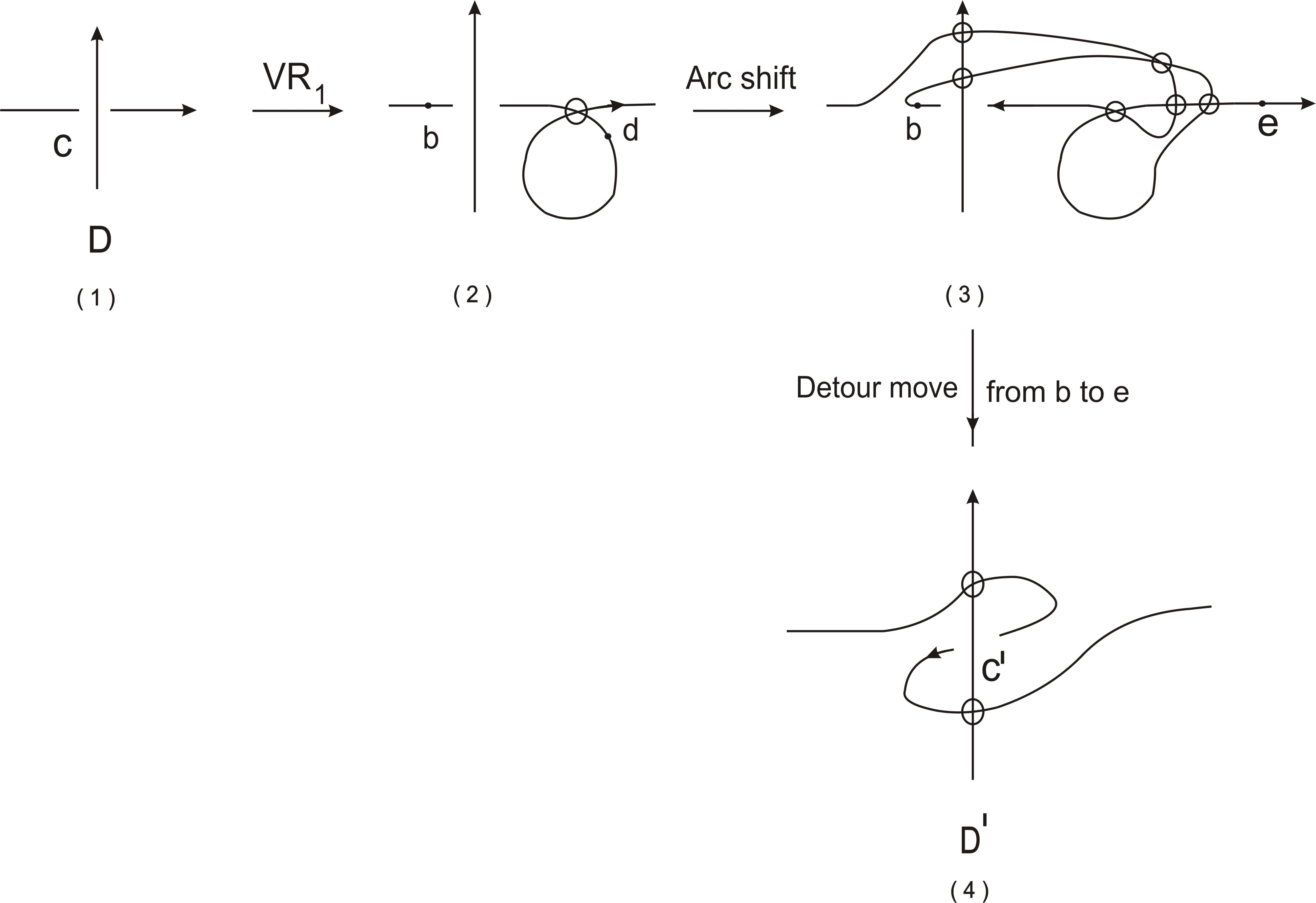}
 \caption{sign($c'$)= - sign($c$)}
  \label{SIGNCHANGE}
\end{figure}

\begin{proposition}Let $D$ and $D'$ be two virtual knot diagrams that differ by a $R_3$ move, then $D'$ can be obtained from $D$ by applying three arc shift moves.
\end{proposition}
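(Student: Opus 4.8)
The plan is to reduce the statement to a computation on Gauss diagrams and then invoke the Theorem above. Applying a single $R_3$ move to $D$ changes $G(D)$ into $G(D')$ by exactly the Gauss-diagram $R_3$ move (Fig.~\ref{GDRMOVES1}), while virtual Reidemeister moves, which we are free to use throughout, leave Gauss diagrams unchanged. Hence it suffices to realize the Gauss-diagram $R_3$ move as a composition of exactly three of the Gauss-diagram arc-shift moves $\bar{A}_h$, $\bar{A}_t$, $\bar{A}_{ht}$, $\bar{A}_{th}$ of Fig.~\ref{ARCSHIFTGD1}; by the Theorem, such a sequence then lifts to three arc shift moves together with virtual Reidemeister moves carrying $D$ to $D'$.

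To build the decomposition, recall that an $R_3$ move involves three crossings, one at each vertex of the ``triangle'', and that each side of the triangle is a strand passing through two of the three crossings. Consequently, in $G(D)$ the six ends of the three corresponding arrows occur in three adjacent pairs, each pair consisting of one end of two distinct arrows, and one checks that the $R_3$ move transposes the two ends inside each of these three pairs while keeping all signs fixed. Now transpose the three pairs one at a time. A single transposition of an adjacent pair of arrow-ends, together with the sign change on one of the two arrows that an arc-shift move necessarily produces, is precisely one of $\bar{A}_h$, $\bar{A}_t$, $\bar{A}_{ht}$, $\bar{A}_{th}$ (which one is dictated by whether the two ends are both heads, both tails, or mixed, and in what cyclic order). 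Performing these three moves in succession around the triangle puts every arrow-end into its $R_3$-target position. For the signs: each of the three arrows lies in exactly two of the three transposed pairs, so that, choosing the moves appropriately, each arrow's sign is changed exactly twice and is therefore restored, matching the fact that the oriented $R_3$ move preserves all signs. Thus the composite of the three arc-shift moves equals the $R_3$ move, and the proposition follows.

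The step that requires care --- and which I expect to be the main obstacle --- is the bookkeeping just described: the oriented $R_3$ move comes in several variants (the three strands may be permuted, and the over/under information and signs at the three crossings vary accordingly), and for each variant one must verify both that the three prescribed transpositions are genuinely of the types drawn in Fig.~\ref{ARCSHIFTGD1} and that the sign changes cancel in pairs on every arrow, so that the count of arc shift moves stays three. As in the proof of Proposition~2, it should be enough to treat one representative variant in full detail, with a supporting figure, and then note that the remaining variants are handled in the same way. As an alternative to the Gauss-diagram route one may argue directly on diagrams: first insert a virtual kink by a $VR_1$ move so that the strand to be relocated in the $R_3$ picture lies on an arc of the form required by the definition, push that strand across the opposite crossing by a single arc shift (absorbing the newly created virtual crossings with detour moves), and finally apply Proposition~3 twice to correct the signs of the two crossings whose sign the orientation reversal of Remark~2 has flipped; this again realizes the $R_3$ move with three arc shift moves in total.
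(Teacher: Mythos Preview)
Your approach is essentially the paper's: it too passes to Gauss diagrams, exhibits three arc-shift moves transposing the three adjacent pairs of arrow-ends (the paper names the specific sequence $\boldsymbol{\bar{A}_{th}}$, $\boldsymbol{\bar{A}_{h}}$, $\boldsymbol{\bar{A}_{t}}$ with a supporting figure), and then invokes Theorem~3.1 to lift to diagrams. One slip to correct: each of $\boldsymbol{\bar{A}_h}$, $\boldsymbol{\bar{A}_t}$, $\boldsymbol{\bar{A}_{ht}}$, $\boldsymbol{\bar{A}_{th}}$ switches the signs of \emph{both} arrows involved, not just one---your own parity count (each arrow lies in exactly two of the three pairs, hence is flipped twice) only balances under this correct reading, so the argument stands once that phrasing is fixed.
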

\begin{proof} Consider the Gauss diagrams of $D$ and $D'$ related by a $R_3$ move(see Fig.~\ref{ARCSHIFTR31}). Arc shift moves $\boldsymbol{\bar{A}_{th}}$, $\boldsymbol{\bar{A}_{h}}$ and $\boldsymbol{\bar{A}_{t}}$ applied in succession realize same changes in the Gauss diagram as done by a $R_3$ move as shown in the Fig.~\ref{ARCSHIFTR31}. By theorem 3.1, virtual knot diagrams $D$ and $D'$ corresponding to the Gauss diagrams $G(D)$ and $G(D')$ are related by arc shift moves and virtual Reidemeister moves. Therefore, three arc shift moves together with some virtual Reidemeister moves are enough to realize a $R_3$ move in $D$.
\end{proof}
\begin{figure}[H]
\centering
  \includegraphics[width=8cm,height=7cm]{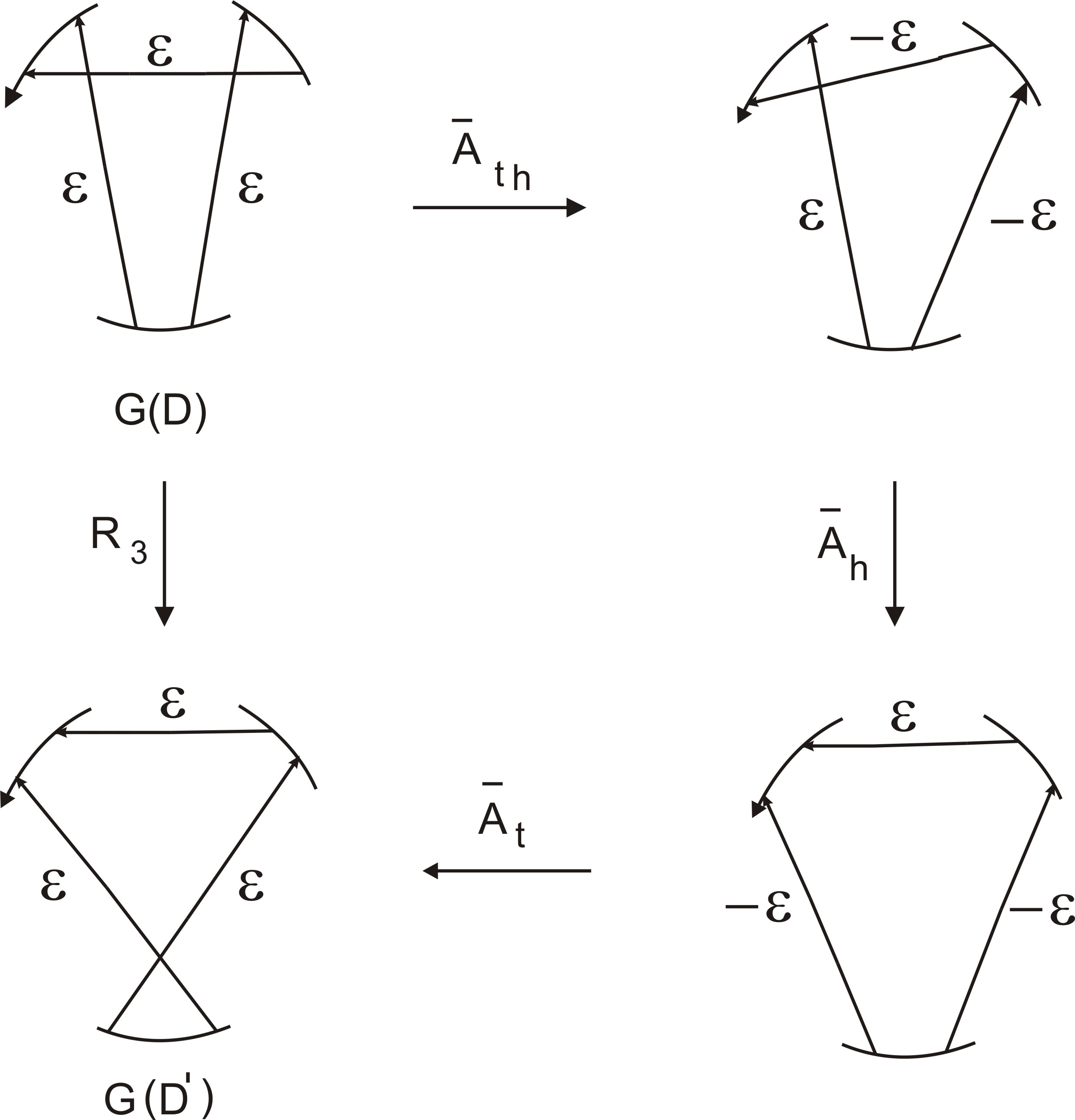}
 \caption{$R_3$ move realized via arc shift moves }
  \label{ARCSHIFTR31}
\end{figure}

\noindent H. Murakami and Y. Nakanishi \cite{murakami1989certain} defined the $\Delta$-move as shown in Fig.~\ref{DELTAMOVE} and established that a classical knot can be unknotted using the $\Delta$-move.

\begin{figure}[H]
\centering
  \includegraphics[width=8cm,height=2cm]{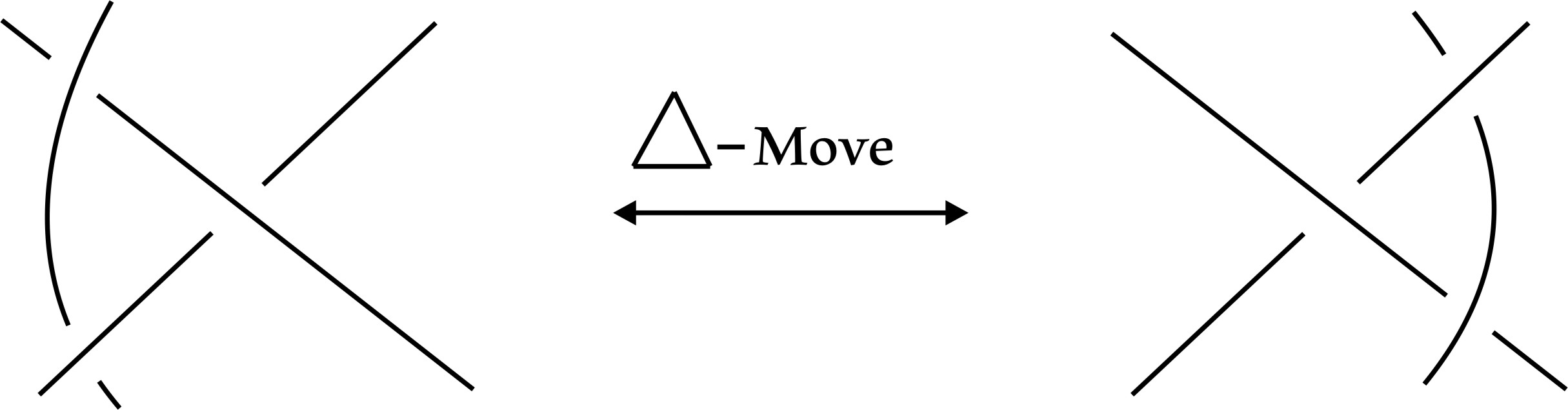}
 \caption{ }
  \label{DELTAMOVE}
\end{figure}
\noindent In the folllowing lemma we show that, a $\Delta$-move can be realized by arc shift moves and virtual Reidemeister moves.
\begin{lemma}Given a virtual knot diagram $D$, let $D'$ is obtained from $D$ by applying a $\Delta$-move. Then, there exists arc shift moves which applied in $D$ gives an equivalent diagram of $D'$.
\end{lemma}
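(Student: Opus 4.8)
The plan is to reduce the $\Delta$-move to a sequence of moves we have already analysed, rather than attacking it directly on the diagram. The key observation is that Proposition 5 (the $R_3$ move is realizable by three arc shift moves) together with Proposition 4 (an arc shift move can flip the sign of any single crossing without disturbing the others) already gives us a fairly rich toolkit. Since Proposition 3.4 lets us change a crossing sign freely, and virtual Reidemeister moves are available throughout, the only genuinely new content of a $\Delta$-move beyond $R_3$ is the way three strands are permuted; so I would first look for a way to express the $\Delta$-move combinatorially in terms of $R_3$ and classical $R_2$ moves.

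Concretely, I would proceed in the following steps. First, recall the standard fact that a $\Delta$-move can be obtained from a single $R_3$ move together with a pair of $R_2$ moves (this is essentially the picture where one pushes a strand across a crossing of the other two and then cancels two bigons): isotoping one of the three arcs so it runs alongside a crossing of the other two, one performs an $R_2$ to create the relevant bigon, an $R_3$ to slide the third strand through, and another $R_2$ to remove the leftover bigon. Second, invoke Proposition 5 to replace the $R_3$ move by three arc shift moves (plus virtual Reidemeister moves). Third, observe that a classical $R_2$ move is itself realizable in our setting: using Proposition 3.4 one can adjust crossing signs so that the two crossings created or destroyed have opposite signs as an honest $R_2$ requires, and the bigon creation/removal together with any arc-shift-induced orientation reversal (Remark 3.4) can be absorbed into virtual Reidemeister moves and further arc shifts. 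Assembling these, every elementary piece of the $\Delta$-move decomposition is either a virtual Reidemeister move or an arc shift move, which yields the lemma.

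The step I expect to be the main obstacle is making the third point rigorous, namely controlling the classical $R_2$ moves and the orientation bookkeeping. Proposition 3.4's construction flips a sign by introducing a new virtual crossing via $VR_1$, performing one arc shift, and cleaning up with a Detour move; chaining several of these while simultaneously doing the $R_3$-realizing arc shifts means one has to check that the virtual crossings created at each stage really can be pushed out of the way by Detour moves and do not obstruct the next arc shift. There is also the orientation reversal of Remark 3.4 to track: each arc shift reverses the orientation inside an encircled region, so after an odd number of arc shifts one must verify that the cumulative local orientations still match those demanded by the target diagram $D'$, possibly by performing one extra arc shift purely to restore orientation. I would handle this by drawing the explicit Gauss-diagram picture of the $\Delta$-move analogous to Fig.~\ref{ARCSHIFTR31}, exhibiting a specific short sequence of the moves $\bar{A}_h,\bar{A}_t,\bar{A}_{ht},\bar{A}_{th},\bar{A}_s$ that transforms the Gauss diagram on one side of the $\Delta$-move into the other, and then appealing to Theorem 3.1 to lift this Gauss-diagram sequence back to the level of virtual knot diagrams; this sidesteps most of the planar orientation subtleties since Gauss diagrams do not see virtual crossings.
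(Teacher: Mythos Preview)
Your main line of argument rests on the claimed ``standard fact'' that a $\Delta$-move decomposes into a single $R_3$ move together with a pair of $R_2$ moves. This is false: $R_2$ and $R_3$ are Reidemeister moves and therefore preserve the knot type, whereas the $\Delta$-move is an unknotting operation for classical knots and hence genuinely changes the knot type in general. No finite composition of Reidemeister moves can ever produce a $\Delta$-move, so steps one through three of your plan cannot work as stated. Trying to rescue this by inserting the sign-flipping arc shift of Proposition~3 between the $R_2$'s does not obviously help either, since a sign flip (local orientation reversal at one crossing) is not the same as a crossing change, and it is a crossing change, not a sign change, that separates the $\Delta$-configuration from the $R_3$-configuration.

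Your final paragraph, however, lands exactly on the paper's own proof: compute the effect of the $\Delta$-move on the Gauss diagram and exhibit an explicit short sequence of the elementary arc-shift moves realizing that effect, then invoke Theorem~3.1 to lift the equality back to virtual knot diagrams. The paper does precisely this, using the three-move sequence $\boldsymbol{\bar{A}_{ht}},\,\boldsymbol{\bar{A}_{th}},\,\boldsymbol{\bar{A}_{ht}}$ (see Fig.~\ref{DELTAGD1}). So the detour through $R_3$ and $R_2$ is both incorrect and unnecessary; the Gauss-diagram argument you propose as a fallback is the whole proof.
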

\begin{proof}
Consider the Gauss diagram $G(D)$ corresponding to virtual knot diagram $D$ as shown in Fig.~\ref{DELTAGD1} and apply $\Delta$-move to get $G(D')$. Now, applying three arc shift moves $\boldsymbol{\bar{A}_{ht}}$, $\boldsymbol{\bar{A}_{th}}$ and $\boldsymbol{\bar{A}_{ht}}$ in sequence realizes same change in $G(D)$ as by a $\Delta$-move. By theorem 3.1, Gauss diagrams $G(D)$ and $G(D')$ correspond to virtual knot diagrams related by arc shift moves and virtual Reidemeister moves and hence the result follows.
\end{proof}   
\begin{figure}[H]
\centering
  \includegraphics[width=7cm,height=6cm]{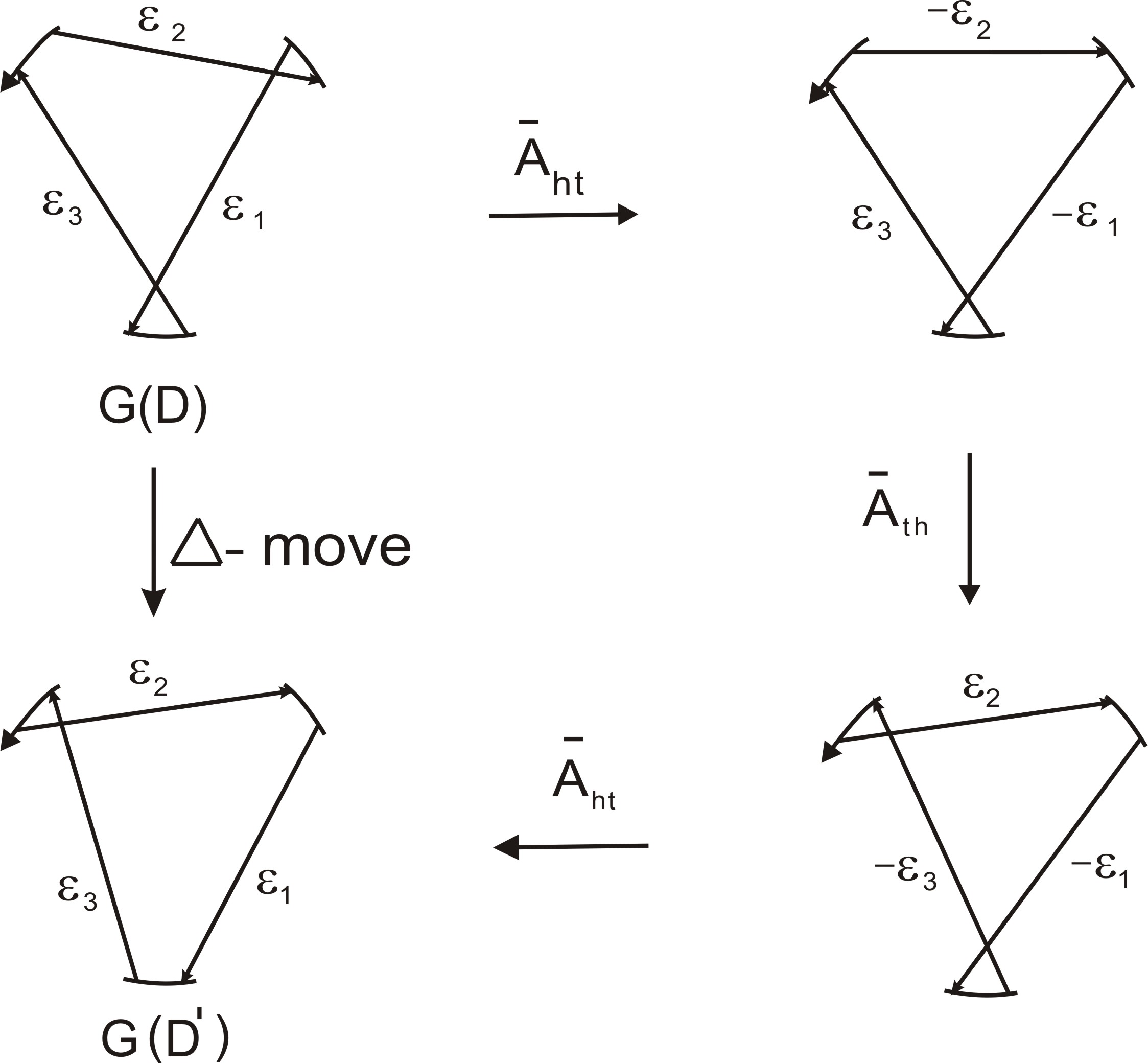}
 \caption{ $\Delta$-move realized using arc shift moves }
  \label{DELTAGD1}
\end{figure}

\noindent As $\Delta$-move is an unknotting operation for classical knots, Lemma 1 guarantees that any classical knot diagram can be transformed into trivial knot diagram using arc shift moves and virtual Reidemeister moves.
\section{Arc shift as an unknotting operation for virtual knots}
Lemma 1 ensures that classical knots considered as a subclass of virtual knots can be unknotted using arc shift moves and virtual Reidemeister moves. However, the result generalizes to every virtual knot as we prove in this section. We use Gauss diagrams to prove the result using the fact that a Gauss diagram defines virtual knot uniquely upto equivalence by moves in the Fig.~\ref{GDRMOVES1}. A Gauss diagram in which no two arrows intersect is called parallel chord diagram and corresponds to a trivial knot.
\begin{theorem} Every virtual knot diagram $D$ can be transformed into trivial knot diagram using arc shift moves and generalized Reidemeister moves.
\end{theorem}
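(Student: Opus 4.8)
The plan is to work entirely at the level of Gauss diagrams, exploiting the fact (stated earlier) that a Gauss diagram determines a virtual knot uniquely up to the moves of Fig.~\ref{GDRMOVES1}, together with Theorem~3.1, which lets us translate any finite sequence of the diagrammatic arc shift moves $\boldsymbol{\bar{A}_h}$, $\boldsymbol{\bar{A}_t}$, $\boldsymbol{\bar{A}_{ht}}$, $\boldsymbol{\bar{A}_{th}}$ on $G(D)$ back into arc shift moves and virtual Reidemeister moves on $D$. So it suffices to show: every Gauss diagram can be carried, by a finite sequence of the moves in Fig.~\ref{ARCSHIFTGD1} (plus the Reidemeister moves on Gauss diagrams), into a parallel chord diagram, i.e.\ one with no two arrows intersecting, which represents the unknot.

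First I would reduce the problem to controlling the \emph{chord intersection pattern} and the \emph{signs} of the arrows, ignoring arrowheads versus arrowtails as much as possible. The key local capability is the move $\boldsymbol{\bar{A}_s}$ (the "self'' case) together with $\boldsymbol{\bar{A}_h}$, $\boldsymbol{\bar{A}_t}$: I expect that, exactly as in the proof that the forbidden moves $F_h, F_t$ trivialize every virtual knot (Kanenobu \cite{kanenobu2001forbidden}, Nelson \cite{nelson2001unknotting}), the arc shift moves let us interchange an arrow endpoint past an adjacent arrow endpoint on the circle — possibly at the cost of creating a bounded number of extra arrows that are themselves removable by $R_1$/$R_2$ on Gauss diagrams. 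Concretely, one shows that the effect of the forbidden move $F_h$ (resp.\ $F_t$) on a Gauss diagram — swapping two adjacent arrowheads (resp.\ arrowtails) — can be realized by a short sequence of arc shift moves; this is the Gauss-diagram analogue of what Propositions~3.4 and~3.5 already do for $R_3$ and $\Delta$ moves. Granting that, the classical "comb the arrows'' argument applies: repeatedly push endpoints around the circle so that all arrowtails cluster consecutively (or, more carefully, so that the underlying chord diagram becomes non-crossing), then cancel adjacent parallel pairs and remove isolated arrows by Reidemeister $R_1$/$R_2$, terminating at a parallel chord diagram.

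The one genuine subtlety is that an arc shift move does not merely permute endpoints: it also reverses the orientation of a sub-arc of the circle (Remark on Fig.~\ref{AKSORREVERSE}) and relabels newly created crossings as virtual. At the Gauss-diagram level the orientation reversal of a sub-arc turns every arrowhead into an arrowtail and flips every sign on the chords with exactly one endpoint in that sub-arc, while chords entirely inside or entirely outside are essentially unchanged. I would handle this by a careful bookkeeping of an invariant such as (number of crossings) $+$ (number of "bad'' adjacent pairs whose endpoints are in the wrong cyclic order), and argue that some arc shift move strictly decreases it; Proposition~3.6 (sign change by a single arc shift) shows the signs are never an obstruction, so the argument need only track the combinatorics of the chord endpoints. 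The main obstacle, then, is establishing the first step rigorously — that the forbidden-move endpoint swap is expressible through the five arc shift moves of Fig.~\ref{ARCSHIFTGD1} — because once the arc shift moves are shown to be "at least as powerful'' as the forbidden moves on Gauss diagrams, the conclusion follows from the known triviality theorem for forbidden moves \cite{kanenobu2001forbidden,nelson2001unknotting,goussarov2000finite}. I would therefore spend most of the proof exhibiting, case by case on crossing type and sign, the explicit short sequences of $\boldsymbol{\bar{A}_h}, \boldsymbol{\bar{A}_t}, \boldsymbol{\bar{A}_{ht}}, \boldsymbol{\bar{A}_{th}}, \boldsymbol{\bar{A}_s}$ that accomplish an adjacent-endpoint transposition, and then invoke the forbidden-move unknotting theorem verbatim.
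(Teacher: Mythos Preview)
Your overall strategy---work on Gauss diagrams, show that arc shift moves let you permute adjacent arrow-ends, then reduce to a parallel chord diagram---is sound and is in fact what the paper does. But you are making the problem harder than it is, because you have misread the Gauss-diagram effect of an arc shift. An arc in this paper's sense passes through \emph{exactly two} crossings, so the ``orientation reversal of a sub-arc'' touches only those two arrow-ends, not a whole stretch of the circle. As recorded in Fig.~\ref{ARCSHIFTGD1} and used explicitly in Proposition~6, each of $\boldsymbol{\bar{A}_h}, \boldsymbol{\bar{A}_t}, \boldsymbol{\bar{A}_{ht}}, \boldsymbol{\bar{A}_{th}}$ simply swaps two adjacent arrow-ends on the circle and flips both signs; it does not convert heads to tails globally or disturb any other chord. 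There is no bookkeeping invariant to manage and no case analysis of ``bad adjacent pairs'' to perform.

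Given that, the detour through the forbidden-move theorem is unnecessary. The paper's proof is the direct ``comb'' argument you allude to, applied immediately: pick any arrow, and repeatedly apply an arc shift to slide its head past whatever arrow-end sits next to it along the orientation (Proposition~1 and Theorem~3.1 guarantee the corresponding arc exists in an equivalent diagram), until no arrow-end lies between its head and tail; then move on to the next arrow. Signs change along the way, but since every parallel chord diagram---regardless of signs---represents the unknot, this is irrelevant. Your plan to first realize $F_h$ and $F_t$ by sequences of arc shifts and then quote \cite{kanenobu2001forbidden,nelson2001unknotting} would work (indeed $\boldsymbol{\bar{A}_h}$ followed by two sign-fixes via $\boldsymbol{\bar{A}_s}$ yields $F_h$), but it buys nothing: the arc shift moves already perform the adjacent-endpoint transpositions that the forbidden-move argument uses, so one may run that argument directly.
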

\begin{proof}
It is enough to prove that using arc shift moves and generalized Reidemeister moves in $D$, Gauss diagram $G(D)$ corresponding to $D$ can be turned into a parallel chord diagram. With anticlockwise orientation on $G(D)$ choose a random arrow and consider the next arrow adjacent to the head of chosen arrow along orientation. Crossings in $D$ corresponding to the two arrows may have only virtual crossings between them. By proposition 1, there exists an equivalent diagram $D'$ of $D$ where both the crossings are contained in an arc $(a,b)$. Arc shift on the arc $(a,b)$ in $D'$ moves across head of the chosen arrow with adjacent arrow in $G(D)$ and also switches signs of both arrows. Continue the process for all the arrows encountered with head of the chosen arrow along orientation till we reach a Gauss diagram having no arrow between head and tail of the random arrow we started with. In the process, we used virtual Reidemeister moves and arc shift moves in $D$ to realize change in Gauss diagram $G(D)$ that makes a randomly chosen arrow free of intersections by other arrows. Repeating the process for all the arrows one by one gives us a Gauss diagram where no two arrows intersect each other, i.e.,a parallel chord diagram as required. Sign of the some of the arrows might change in the whole process but has no affect on the final result as any parallel chord diagram irrespective of the signs of the chords corresponds to trivial knot. Fig.~\ref{ISOLATE} shows an example of turning a Gauss diagram into parallel chord diagram.

\end{proof}


\begin{figure}[H]
\centering
  \includegraphics[width=10cm,height=2cm]{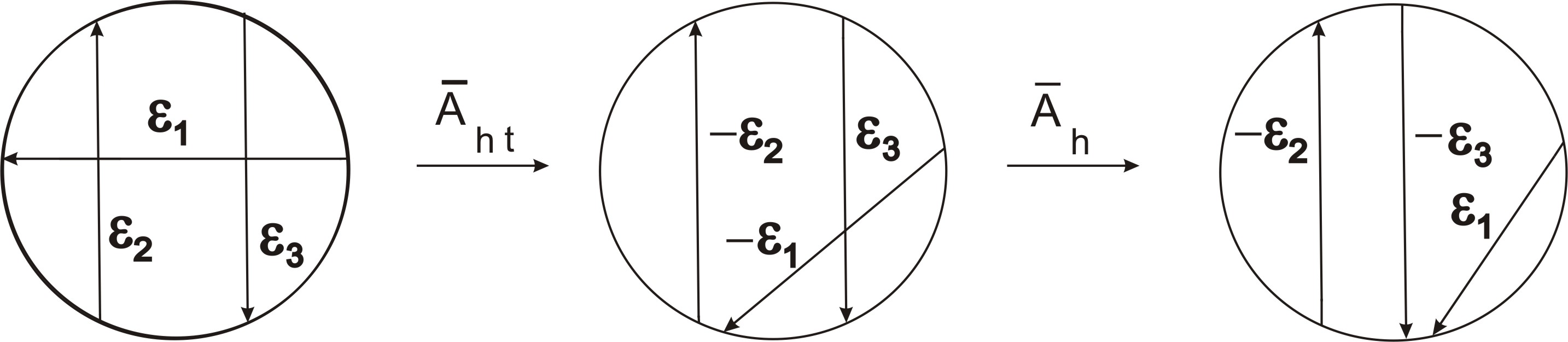}
 \caption{Turning a Gauss diagram into parallel chord diagram }
  \label{ISOLATE}
\end{figure}

\noindent we give an example of theorem 4.1.

\begin{example}Consider the virtual knot diagram $D$ of left handed virtual trefoil knot(Fig.~\ref{UNKNOTVT}). $D$ is transformed into trivial knot using single arc shift move on the arc $(a,b)$ followed by generalized Reidemeister moves.
\end{example}
\begin{figure}[H]
\centering
  \includegraphics[width=11cm,height=4cm]{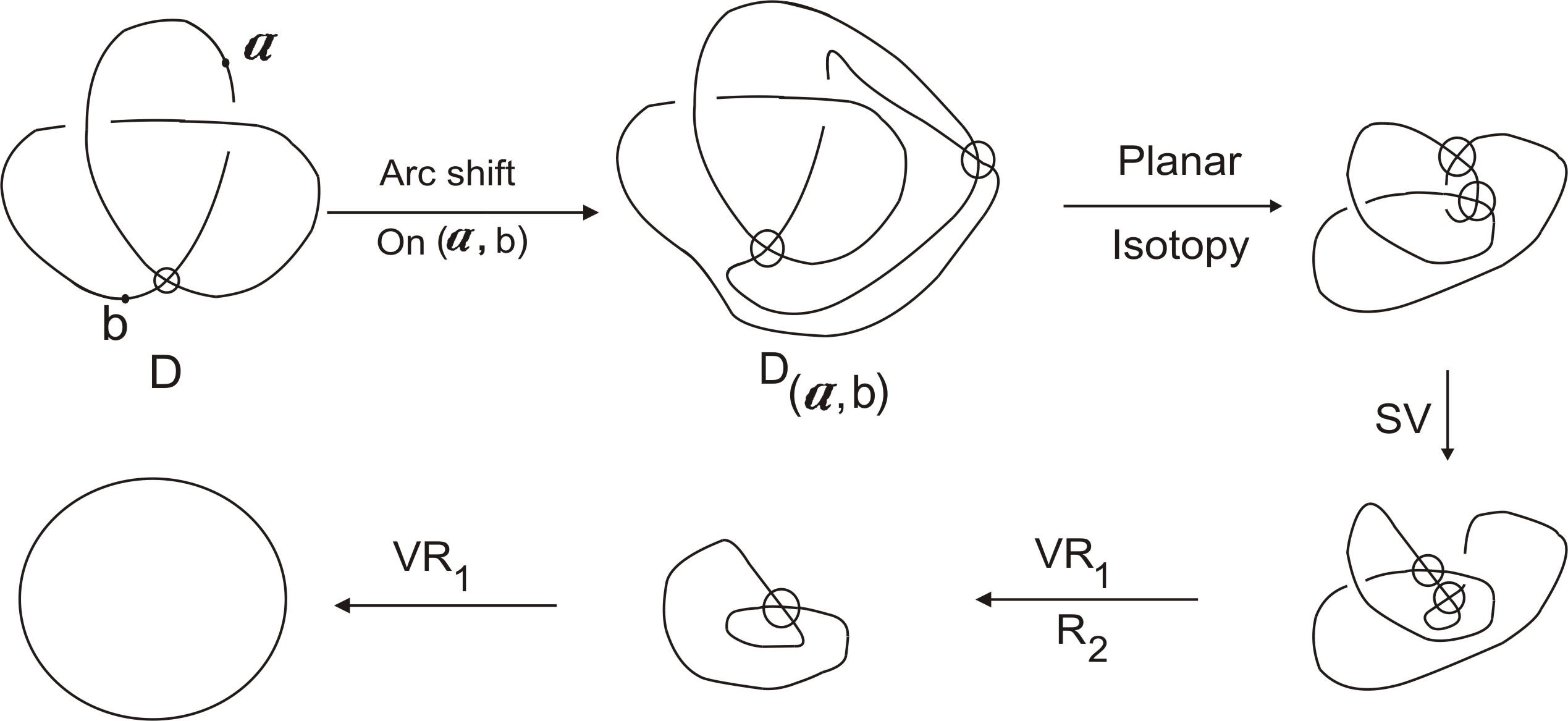}
 \caption{Unknotting virtual trefoil using arc shift move}
  \label{UNKNOTVT}
\end{figure}
\begin{proposition}Let $D$ and $D'$ be two virtual knot diagrams. Let $n$ and $m$ be the minimum number of arc shift moves needed to transform $D$ and $D'$ respectively into trivial knot. If $D \sim D'$ then $n=m$.
\end{proposition}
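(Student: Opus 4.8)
The plan is to prove the two inequalities $m\le n$ and $n\le m$ by a symmetric argument, the whole point being that in the definition of the arc shift number only the arc shift moves are counted, while generalized Reidemeister moves are free.

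First I would unwind the definitions. By Theorem 4.1 the numbers $n$ and $m$ are finite, so the statement is non-vacuous. By the minimality defining $n$, there is a finite sequence of moves, each either an arc shift move or a generalized Reidemeister move, carrying $D$ to a trivial knot diagram, with exactly $n$ of these moves being arc shift moves. Since $D\sim D'$, there is also a finite sequence consisting \emph{only} of generalized Reidemeister moves carrying $D'$ to $D$. Concatenating these two sequences yields a sequence of arc shift and generalized Reidemeister moves from $D'$ to a trivial knot diagram that uses exactly $n$ arc shift moves. Since $m$ is the \emph{minimum} number of arc shift moves needed to unknot $D'$, this gives $m\le n$.

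Next I would run the identical argument with the roles of $D$ and $D'$ interchanged, using that $\sim$ is symmetric so that $D'\sim D$; this produces a sequence from $D$ to a trivial diagram with exactly $m$ arc shift moves, hence $n\le m$. Combining the two inequalities gives $n=m$. The argument has no real obstacle: the only points requiring care are the bookkeeping observation that prepending a block of purely generalized Reidemeister moves leaves the count of arc shift moves unchanged, and the use of symmetry of the equivalence relation $\sim$ to obtain the reverse inequality.
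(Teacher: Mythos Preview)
Your argument is correct and is essentially identical to the paper's own proof: both use the equivalence $D\sim D'$ to prepend a block of generalized Reidemeister moves to an unknotting sequence for $D$, thereby unknotting $D'$ with the same number of arc shifts, and then appeal to symmetry. Your write-up is in fact slightly more explicit about the two inequalities $m\le n$ and $n\le m$ and about finiteness via Theorem~4.1, but the underlying idea is the same.
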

\begin{proof}
Since both $D$ and $D'$ are equivalent, there exists a sequence of generalized Reidemeister moves relating $D$ with $D'$. Suppose $D$ can be turned into trivial knot using $n$ number of arc shift moves and some generalized Reidemeister moves. Using sequence of generalized Reidemeister moves relating $D$ with $D'$ we first transform $D'$ into $D$ and then use $n$ arc shift moves to turn $D$ into trivial knot. Similarly if $m$ number of arc shift moves are needed to turn $D'$ into trivial knot then $D$ can also be made trivial using $m$ arc shift moves. Taking minimum over all such $m$ and $n$ gives desired result.
\end{proof}

\noindent This motivates us to define the \emph{arc shift number} for a virtual knot.
\begin{definition} \emph{For any virtual knot $K$, the} arc shift number \emph{of $K$, $A(K)$ is the minimum number of arc shift moves needed to turn a diagram of $K$ into trivial knot.}
\end{definition}

Since any diagram of trivial knot can be converted into unknot using generalized Reidemeister moves, no arc shift move is needed and hence $A(K)$ is zero for trivial knot. However, a diagram of nontrivial knot necessarily needs arc shift moves to be converted into trivial knot. Therefore $A(K)$ is strictly positive for a nontrivial classical or virtual knot.

L.H. Kauffman \cite{kauffman2004self} defines parity of a crossing $c$ of a virtual knot diagram $K$. The parity of $c$ is odd if odd number of classical crossings are encountered while moving along the diagram on any path that starts and ends both at $c$, otherwise the crossing is called even. Likewise, a crossing $c$ is odd(even) if and only if the chord corresponding to $c$ in Gauss diagram intersects odd(even) number of chords. Chords corresponding to odd(even) crossings are referred as odd(even) chords respectively. Denote by \emph{Odd(K)} the set containing all the odd crossings in $K$, then sum of the signs of all the crossings in \emph{Odd(K)} is called \emph{odd writhe} of $K$ denoted by $J(K)$, i.e.,
\hspace*{4cm} $J(K) = \sum\nolimits_{c\in Odd(K)}sign($c$)$.

\noindent Equivalently, $J(K)$ is the sum of the signs of odd chords in the Gauss diagram corresponding to $K$. $J(K)$ is a virtual knot invariant and is zero for classical knots. As a consequence, whenever $J(K)$ is nonzero $K$ is necessarily nonclassical. We give a lowerbound on $A(K)$ in terms of odd writhe $J(K)$ by analyzing the change occurring in $J(K)$ for two virtual knot diagrams that differ by a single arc shift move.
\begin{proposition} If $D$ and $D'$ are two virtual knot diagrams that differ by an arc shift move, then either $J(D') = J(D)$ or $J(D') = J(D) \pm 2$.
\end{proposition}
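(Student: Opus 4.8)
The plan is to work entirely at the level of Gauss diagrams, since the odd writhe $J$ is read off from the Gauss diagram and every arc shift move is encoded by one of the five diagrammatic moves $\bar{A}_h,\bar{A}_t,\bar{A}_{ht},\bar{A}_{th},\bar{A}_s$ of Fig.~\ref{ARCSHIFTGD1}. For each of these moves I would examine how the set of chords, their pairwise intersection pattern, and their signs change. The key structural observation is that each arc shift move affects only the two arrows $\alpha,\beta$ whose adjacent endpoints sit at the crossing pair $(c_1,c_2)$ involved in the move: it slides one endpoint of $\beta$ past the relevant endpoint of $\alpha$ (possibly across a virtual crossing, which is invisible in the Gauss diagram), and simultaneously flips the signs of $\alpha$ and $\beta$. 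No other chord has any endpoint moved, so the only intersection numbers that can change are those of $\alpha$ and $\beta$.

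The main computation is then the following bookkeeping. Sliding the endpoint of $\beta$ past the endpoint of $\alpha$ changes whether $\alpha$ and $\beta$ intersect each other, so the parity of $\alpha$'s chord and of $\beta$'s chord can each flip; but crucially, they flip \emph{together}, since "$\alpha$ intersects $\beta$" is a symmetric relation. Hence there are exactly two cases: (i) both $\alpha$ and $\beta$ keep their parity, or (ii) both $\alpha$ and $\beta$ change parity. In case (i), whichever of $\alpha,\beta$ were odd before remain odd after, but their signs are both negated; an odd chord contributing $\varepsilon$ now contributes $-\varepsilon$, so the net change in $J$ coming from each such chord is $-2\varepsilon\in\{0,\pm 2\}$ — wait, more carefully: if neither is odd, $J$ is unchanged; if exactly one, say $\alpha$, is odd, then $J$ changes by $-2\operatorname{sign}(\alpha)=\pm2$; if both are odd, $J$ changes by $-2\operatorname{sign}(\alpha)-2\operatorname{sign}(\beta)\in\{0,\pm4\}$. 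This last sub-case is the one that needs extra care, and it is where I expect the real work to be: I claim that when both $\alpha$ and $\beta$ are odd chords that intersect each other and retain that intersection, the move $\bar A_\bullet$ in question actually forces $\operatorname{sign}(\alpha)=-\operatorname{sign}(\beta)$ in the "before" picture or some similar constraint, so that the $\pm4$ possibility never occurs; this must be checked move-by-move against Fig.~\ref{ARCSHIFTGD1}, using that the two arrowheads/arrowtails sit adjacent on the circle and that one of $c_1,c_2$ is required to be a classical crossing while the configuration of the other is fixed by which of the five moves we are in. In case (ii), exactly one of $\alpha,\beta$ gains or loses oddness relative to the other, but since their parities flip simultaneously, again either zero, one, or two of them change oddness status; combined with the sign flips, a parallel analysis shows the contribution to $\Delta J$ lands in $\{0,\pm2\}$, and the putative $\pm4$ terms again cancel because of the sign relation between $\alpha$ and $\beta$ dictated by the local picture.

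Assembling these cases, in every instance $J(D')-J(D)\in\{0,\pm2\}$, which is the claim. I would organize the write-up as a short table: columns for the five moves, rows recording (a) whether the $\alpha$–$\beta$ intersection toggles, (b) the sign change on $\alpha$ and on $\beta$, (c) the resulting $\Delta J$. The only genuinely delicate point, and the one I would flag as the crux, is ruling out the $\Delta J=\pm4$ scenario when both affected chords are odd; everything else is routine parity accounting. It may help to note that the $\bar A_s$ move (the "self" case, where both adjacent endpoints belong to the same arrow) is degenerate: there only one chord is involved, its sign flips, and its parity is unchanged, so $\Delta J\in\{0,\pm2\}$ is immediate.
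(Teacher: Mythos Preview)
Your overall strategy---work at the Gauss diagram level, track how the move affects the signs and the parities of the two involved chords $\alpha,\beta$---is exactly the paper's approach. But you have introduced a spurious case that creates the very ``$\pm 4$ problem'' you then struggle to rule out.

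You yourself observe that sliding the adjacent endpoint of $\beta$ past that of $\alpha$ \emph{toggles} whether $\alpha$ and $\beta$ intersect. Since no other chord has an endpoint between these two adjacent ends, the intersection number of $\alpha$ (and of $\beta$) with every other chord is unchanged. Hence the parity of $\alpha$ and the parity of $\beta$ each change by exactly one: both parities \emph{always} flip under each of $\bar A_h,\bar A_t,\bar A_{ht},\bar A_{th}$. Your case~(i), ``both keep their parity,'' is therefore vacuous, and the delicate move-by-move sign check you propose to exclude $\Delta J=\pm 4$ is unnecessary. (Indeed no such sign constraint holds in general, so the check you outline would not succeed; fortunately it is never needed.)

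Once you use that both parities flip, the bookkeeping collapses to a single line. Write $\varepsilon_i=\operatorname{sign}(c_i)$. If $c_i$ was odd it contributed $\varepsilon_i$ and now, being even, contributes $0$; if $c_i$ was even it contributed $0$ and now, being odd with sign $-\varepsilon_i$, contributes $-\varepsilon_i$. Either way the net change from $c_i$ is $-\varepsilon_i$, so
\[
J(D')-J(D)=-\varepsilon_1-\varepsilon_2\in\{-2,0,2\}.
\]
This is precisely the paper's computation (its four cases all yield $J(D')=J(D)-(\varepsilon_1+\varepsilon_2)$). Your treatment of $\bar A_s$ is correct and matches the paper.
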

\begin{proof}
Let the arc shift move be one of $\boldsymbol{\bar{A}_h} $, $ \boldsymbol{\bar{A}_t} $, $ \boldsymbol{\bar{A}_{ht}} $ or $ \boldsymbol{\bar{A}_{th}} $.
Consider the Gauss diagrams corresponding to $D$ and $D'$. We note that arc shift moves $\boldsymbol{\bar{A}_h} $, $ \boldsymbol{\bar{A}_t} $, $ \boldsymbol{\bar{A}_{ht}} $ and $ \boldsymbol{\bar{A}_{th}} $ moves adjacent ends of two chords past each other switching both the signs. As a result, parity of the two chords involved gets flipped(odd/even to even/odd) and remaining chords maintains the same parity. With a slight abuse of notation we denote both crossings and chords corresponding to them by $c_1$,$c_2$ and assume that $sign(c_1)= \varepsilon_1$, $sign(c_2)= \varepsilon_2$. Let $c'_1$,$c'_2$ denotes the corresponding chords after applying the arc shift move, thus $sign(c'_1)= -\varepsilon_1$, $sign(c'_2)= -\varepsilon_2$ and $c'_1$,$c'_2$ have parity opposite to $c_1$,$c_2$ respectively. We discuss all four cases based on the parity of $c_1$,$c_2$ and note the corresponding change in odd writhe $J(D)$.

\noindent \textbf{Case 1}: When both $c_1$,$c_2$ are even.\\
 $c_1$,$c_2$ being both even do not contribute to $J(D)$, while $c'_1$,$c'_2$ both being odd contribute in $J(D')$. We have
\begin{equation}
  \begin{aligned}
         J(D') & = J(D) + sign(c'_1)+sign(c'_2) \\
               & = J(D) + (-\varepsilon_1)+(-\varepsilon_2)\\
               & = J(D) - (\varepsilon_1+\varepsilon_2),
 \end{aligned}
\end{equation}

\noindent \textbf{Case 2}: When both $c_1$,$c_2$ are odd.\\
 $c_1$,$c_2$ being both odd contribute to $J(D)$, while $c'_1$,$c'_2$ both being even do not contributein $J(D')$. We have
\begin{equation}
  \begin{aligned}
         J(D') & = J(D) - sign(c_1)-sign(c_2) \\
               & = J(D) -\varepsilon_1-\varepsilon_2\\
               & = J(D) - (\varepsilon_1+\varepsilon_2),
 \end{aligned}
\end{equation}

\noindent \textbf{Case 3}: When $c_1$ is even and $c_2$ is odd.\\
Only $c_2$ contributes to $J(D)$, while among $c'_1$,$c'_2$ only $c'_1$ being odd contributes\\
in $J(D')$. We have
\begin{equation}
  \begin{aligned}
         J(D') & = J(D) - sign(c_2)+sign(c'_1) \\
               & = J(D) -\varepsilon_2+(-\varepsilon_1)\\
               & = J(D) - (\varepsilon_1+\varepsilon_2),
 \end{aligned}
\end{equation}

\noindent \textbf{Case 4}: When $c_1$ is odd and $c_2$ is even.\\
This case is similar to case 3.\\
Since only possible values for $\varepsilon_1+\varepsilon_2$ are $0,-2,+2 $, we have either $J(D') = J(D)$ or $J(D') = J(D) \pm 2$. 

Only remaining arc shift move $\boldsymbol{\bar{A}_s} $ switches sign of the corresponding single chord thus keeping parity of all the chords unaltered. As a result if affected chord is even then odd writhe remains same, i.e., $J(D') = J(D)$ and if affected chord is odd then we have $J(D') = J(D) \pm 2$.
\end{proof}

In the following theorem using proposition 6, we provide a lowerbound to the arc shift number $A(K)$.
\begin{theorem}For a virtual knot $K$, arc shift number $A(K)\geq |J(K)|/2$.
\end{theorem}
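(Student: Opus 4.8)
The plan is to argue by induction on the number of arc shift moves in an optimal unknotting sequence for $K$, using Proposition 6 as the engine. Suppose $A(K) = n$, so there is a diagram $D$ of $K$, a sequence of arc shift moves $D = D_0 \to D_1 \to \cdots \to D_n$ (interspersed with generalized Reidemeister moves), where $D_n$ is a diagram of the trivial knot, and no shorter such sequence exists. Since $J$ is a virtual knot invariant, $J(D_0) = J(K)$ and $J(D_n) = J(\text{unknot}) = 0$; generalized Reidemeister moves do not change $J$, so only the $n$ arc shift moves can alter it.

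First I would record the one-step estimate: by Proposition 6, passing from $D_i$ to $D_{i+1}$ by a single arc shift move changes the odd writhe by either $0$ or $\pm 2$, hence $|J(D_{i+1}) - J(D_i)| \leq 2$. Then I would telescope:
\[
|J(K)| = |J(D_0) - J(D_n)| = \left| \sum_{i=0}^{n-1} \bigl( J(D_i) - J(D_{i+1}) \bigr) \right| \leq \sum_{i=0}^{n-1} |J(D_i) - J(D_{i+1})| \leq 2n.
\]
Rearranging gives $n \geq |J(K)|/2$, i.e. $A(K) \geq |J(K)|/2$, which is the claim.

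I do not expect any serious obstacle here: the theorem is essentially a bookkeeping consequence of Proposition 6 together with the invariance of $J$. The only points needing a word of care are (i) that the arc shift moves in an unknotting sequence may be separated by generalized Reidemeister moves, so one should note explicitly that $J$ is unchanged across those moves and that Proposition 6 applies to each arc shift step regardless of what diagram it is performed on; and (ii) that $J$ of any diagram of the trivial knot is $0$, which follows because $J$ is an invariant and the standard unknot diagram has no crossings. With these remarks in place the telescoping inequality above completes the proof.
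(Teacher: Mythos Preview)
Your proposal is correct and follows essentially the same route as the paper: both set up an unknotting sequence realizing $A(K)=n$, telescope $|J(K)|=|J(D_0)-J(D_n)|$ over the steps, and bound each arc shift step by $2$ via Proposition~6 while the generalized Reidemeister steps contribute $0$ by invariance of $J$. The only cosmetic difference is that the paper indexes every move (Reidemeister and arc shift) in one long sequence of length $t$ with exactly $n$ nonzero terms, whereas you collapse the Reidemeister blocks and index only the arc shift steps; both yield $|J(K)|\le 2n$.
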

\begin{proof}
Let $A(K)= n$ and $K=K_0\rightarrow K_1\rightarrow K_2\rightarrow K_3\rightarrow\cdots\rightarrow K_t$ be the sequence realizing $A(K)$. $K_t$ is unknot diagram and each $K_i$ is obtained from $K_{i-1}$ by either an arc shift move or generalized Reidemeister move. Exactly $n$ terms in the sequence correspond to arc shift move to realize $A(K)$. We have,
\begin{equation}
  \begin{aligned}
         |J(K_t)-J(K_0)| & = |J(K_t)-J(K_{t-1})+ J(K_{t-1})-J(K_{t-2})+\cdots+J(K_1)-J(K_0)|\\\\
               & \leq |J(K_t)-J(K_{t-1})|+ |J(K_{t-1})-J(K_{t-2})|+\cdots+|J(K_1)-J(K_0)|.
 \end{aligned}
\end{equation}
Note that $J(K_t)=0$ and $J(K_0)=J(K)$. In the inequality (5) exactly $n$ sums correspond to arc shift moves and rest all corresponding to generalized Reidemeister moves. Using invariance of odd writhe and proposition 5, we have,
\begin{equation}
 |J(K)|\leq 2n.
\end{equation}
Thus $ n\geq|J(K)|/2$ and hence the result follows.
\end{proof}

We use Theorem 4.2 to determine arc shift number for virtual left hand trefoil knot, shown in Fig.~\ref{EXAMPLE2}.
\begin{example} \emph{Virtual left hand trefoil knot $K$ has $A(K)=1$. From the diagram $K$ (Fig.~\ref{EXAMPLE2}) it is immediate that $J(K)=-2$ as both the crossings are odd crossings. Using theorem 4.2 we have $A(K)\geq 1$ and as it was shown in the example 1, $K$ can be simplified into trivial knot using one arc shift move, hence $A(K)\leq 1$. We conclude that $1\leq A(K)\leq1$, i.e., $A(K)=1$.}
\end{example}

\begin{figure}[H]
\centering
  \includegraphics[width=2.5cm,height=2cm]{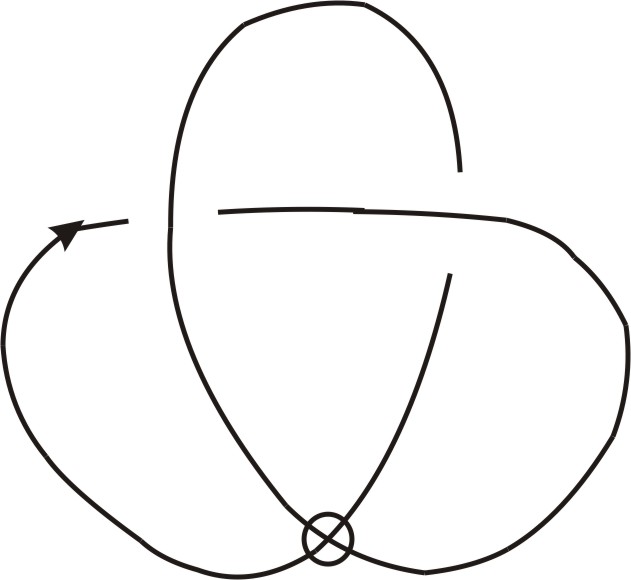}
 \caption{ }
  \label{EXAMPLE2}
\end{figure}

\section{Region arc shift}
In this section, we define {\it region arc shift operation}($RAS$) at a region in virtual knot diagram and establish it as an unknotting operation for virtual knots assisted by generalized Reidemeister moves.

\noindent For a given virtual knot diagram $D$ in $\mathbb{R}^2$, a region is a connected component of the complement of four-valent graph $D_{G}$ in $\mathbb{R}^2$, where $D_{G}$ is obtained from $D$ by replacing each classical and virtual crossings with a vertex.

\noindent {\it Region arc shift operation} at a region $R$ of diagram $D$ is a local transformation on $D$ involving arc shift operations at each arc incident on the boundary $\partial R$ of region $R$. The diagram obtained from $D$ as a result of region arc shift at the region $R$ is denoted by $D_{R}$.
\begin{figure}[H]
\centering
  \includegraphics[width=12cm,height=4cm]{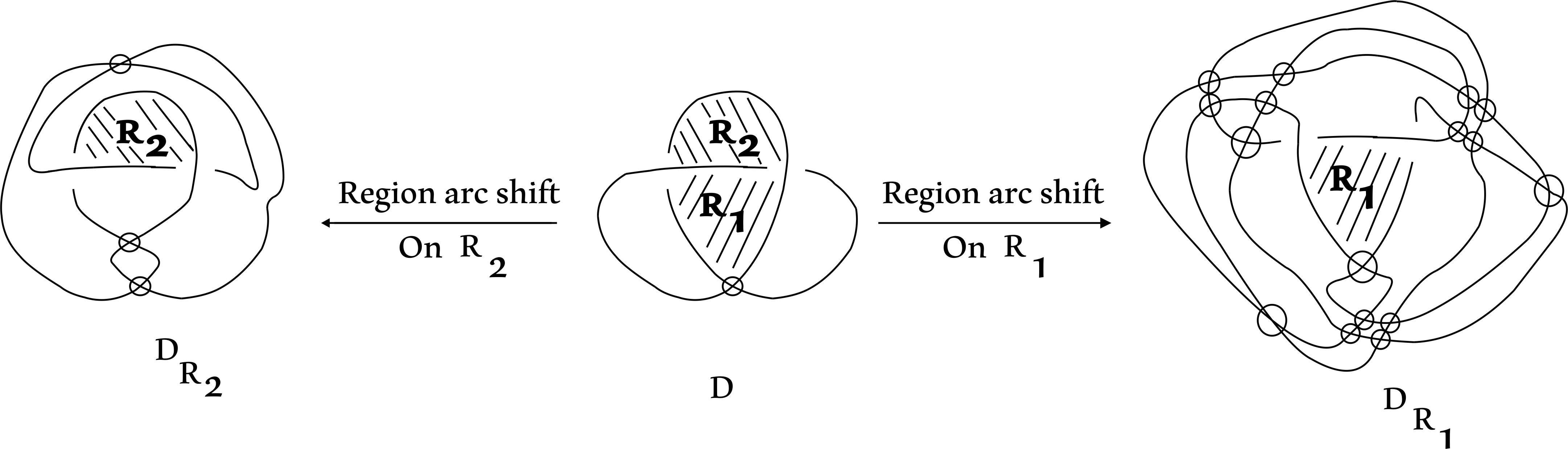}
 \caption{ Region arc shift on region $R_1$ and $R_2$}
  \label{RASVTEX1}
\end{figure}

\noindent In Fig.~\ref{RASVTEX1}, Diagrams $D_{R_1}$ and $D_{R_2}$ are obtained from diagram $D$ by applying region arc shift operation at regions $R_1$ and $R_2$ respectively. Observe that all the arcs incident on the boundary of regions $R_1$ and $R_2$ undergoes arc shift as required.
\begin{proposition}
Let $D$ be a virtual knot diagram and $R$ be a region of $D$.
If we apply the {\it region arc shift operation} consecutively two times on $R$, then resulting diagram $(D_{R})_{R}$ is equivalent to $D$.
\end{proposition}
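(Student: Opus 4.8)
The strategy is to reduce this to the earlier Proposition 2, which says that two successive arc shift moves on one and the same arc produce an equivalent diagram, together with the fact that a region arc shift is merely a finite family of arc shifts---one per arc of $\partial R$---carried out on essentially disjoint parts of the diagram.

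First I would fix notation: let $\alpha_1,\dots,\alpha_k$ be the arcs of $D$ incident on $\partial R$ and $c_1,\dots,c_k$ the crossings on $\partial R$, so that cyclically $\alpha_i$ joins $c_{i-1}$ to $c_i$. Each arc shift move is supported in an arbitrarily thin neighbourhood of the arc it acts on: it only cuts the two loose ends near that arc's endpoints, reconnects them the other way, and records the newly created crossings as virtual. Hence the $k$ arc shifts making up a region arc shift occupy essentially disjoint disks, may be performed in any order, and $D_R$ is well defined up to equivalence. Next, by the observation recorded just before Proposition 2, an arc shift on an arc through a crossing pair $(c,c')$ leaves a diagram still containing an arc through $(c,c')$; applying this around $\partial R$, the region $R$ persists in $D_R$ (compare $D_{R_1},D_{R_2}$ in Fig.~\ref{RASVTEX1}), and the arcs of $D_R$ meeting $\partial R$ are arcs $\alpha_1',\dots,\alpha_k'$ through the same pairs $(c_{i-1},c_i)$. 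So the second region arc shift arc-shifts each $\alpha_i'$, and for each $i$ the combined effect on $\alpha_i$ is ``arc shift applied twice on the same arc''. By Proposition 2 each such double move returns an equivalent diagram (the remaining classical/virtual and over/under configurations being handled exactly as there), and since these local equivalences occur in disjoint neighbourhoods they compose to give $(D_R)_R\sim D$.

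The step I expect to cost the most work is the bookkeeping at the crossings $c_i$ shared by two consecutive boundary arcs: at such a $c_i$ both $\alpha_i$ and $\alpha_{i+1}$ get reconnected, and one must check---case by case from the pictures, also invoking the orientation-reversal remark so that the encircled regions of the two successive arc shifts align---that these surgeries do not interfere and that the arc of $D_R$ through $(c_{i-1},c_i)$ is the honest successor of $\alpha_i$, so that Proposition 2 applies verbatim. One should also note the degenerate case in which $R$ meets some crossing at two opposite corners: there the corresponding arc is shifted twice already within one region arc shift and hence four times in total, but four arc shifts on one arc still compose to the identity up to equivalence by iterating Proposition 2, so the conclusion stands.
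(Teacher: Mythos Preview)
Your approach is correct and is essentially the paper's own: the paper's proof consists of the single line ``Result follows from Proposition 2,'' i.e., exactly the reduction you carry out. Your extra bookkeeping about disjoint supports, shared boundary crossings, and degenerate corners is more than the paper actually supplies, but it fills in the same reduction rather than taking a different route.
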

\begin{proof}
Result follows from proposition 2.
\end{proof}
Next we prove that {\it region arc shift operation} is an unknotting operation for virtual knots along with generalized Reidemeister moves. In \cite{kanenobu2001forbidden}, T. Kanenobu proved that any virtual knot diagram can be deformed into trivial knot by applying forbidden moves and Reidemeister moves finitely many times. S. Nelson \cite{nelson2001unknotting} gave an alternative proof of the same using Gauss diagrams. Therefore, to prove that {\it region arc shift operation} is an unknotting operation, it is indeed enough to show that both forbidden moves can be realized by region arc shift operation.

\begin{proposition}
Let $D$ be a virtual knot diagram and $D'$ be the diagram obtained from $D$ by applying forbidden move $F_u$. Then, there exists a region $R$ in $D$ such that $RAS$ at region $R$ results in the diagram $D_R$ equivalent to $D'$.
\end{proposition}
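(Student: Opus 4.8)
The plan is to work entirely at the level of Gauss diagrams. A forbidden move $F_u$ (here $F_u$ denotes one of $F_h$ or $F_t$ from Fig.~\ref{FORBIDDEN}, say the head-head exchange) acts on a Gauss diagram by swapping the adjacent endpoints of two chords, without changing any signs. On the other hand, each of the arc shift moves $\boldsymbol{\bar{A}_h}$, $\boldsymbol{\bar{A}_t}$, $\boldsymbol{\bar{A}_{ht}}$, $\boldsymbol{\bar{A}_{th}}$ also swaps two adjacent chord-endpoints but in addition switches the signs of the two chords involved. So the first step is to identify the region $R$ in $D$ whose boundary arcs are precisely the arcs sitting between the relevant pairs of consecutive crossings, so that performing $RAS$ at $R$ amounts to performing a controlled collection of these single-arc shifts along $\partial R$.

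Next I would set up the combinatorics of $\partial R$. By Proposition 1 (after detour moves) we may assume every pair of consecutive crossings around $\partial R$ lies on a genuine arc, so $RAS$ at $R$ is literally the composition of one arc shift per boundary arc. I would track the effect on the Gauss diagram: going around $\partial R$, each arc shift transposes a pair of adjacent endpoints and flips two signs. The key observation is that the local picture of $F_u$ involves exactly two chords whose endpoints need to be exchanged, and one can choose $R$ to be the small bigon- or triangle-type region straddling those two chords so that the net endpoint permutation produced by $RAS$ at $R$ is exactly the transposition realized by $F_u$. The sign flips accumulated around $\partial R$ must then be cancelled — either because each chord touched by $\partial R$ is touched an even number of times (its sign is flipped back), or, for the two chords that genuinely move, because the sign discrepancy is absorbed by subsequent generalized Reidemeister moves (recall from the proof of Theorem 4.1 that signs of chords in a parallel chord configuration are irrelevant, and more locally $VR$/$R$ moves can fix sign mismatches that do not change the underlying chord diagram combinatorics). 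I would then invoke Theorem 3.1 to pass from the Gauss-diagram equivalence back to an honest equivalence of virtual knot diagrams: $D_R$ and $D'$ have Gauss diagrams related by the listed arc shift moves, hence $D_R \sim D'$.

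The main obstacle I anticipate is the bookkeeping of the sign changes: a naive choice of $R$ will produce extra sign flips on chords that are not supposed to change, and one must argue these either cancel in pairs around $\partial R$ or can be removed by arc shift moves of type $\boldsymbol{\bar{A}_s}$ (which flip a single sign) together with generalized Reidemeister moves, without disturbing the rest of the diagram — essentially re-using Proposition 5. Making the choice of region precise, and checking that the induced permutation of chord-endpoints is exactly that of $F_u$ and nothing more, is the delicate point; the rest is a diagram chase. I would organize the argument around an explicit local figure showing $D$, the chosen region $R$, the boundary arcs, and the Gauss-diagram effect of $RAS$ at $R$ side by side with the Gauss-diagram effect of $F_u$, and then note that the remaining cases ($F_t$, and the various crossing-type/sign configurations) follow by the same reasoning.
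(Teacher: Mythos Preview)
Your overall strategy---work at the Gauss-diagram level, choose a small triangular region, and show the net effect on chords matches $F_u$---is the same as the paper's. What you are missing is the concrete mechanism that makes the sign bookkeeping trivial, and your hedged alternatives (absorbing sign discrepancies via generalized Reidemeister moves, or invoking Proposition~5 after the fact) would not work: a bare sign flip on a chord is \emph{not} achievable by generalized Reidemeister moves alone, and adding extra arc shift moves outside the chosen region would no longer be a single $RAS$.

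The clean observation you need is this. In the local picture of the forbidden move $F_h$ there are two classical crossings $c_1,c_2$ and one \emph{virtual} crossing $v$, bounding a triangular region $R$. The three boundary arcs of $R$ are: the arc $\alpha$ between $c_1$ and $c_2$, and the arcs $\beta,\gamma$ joining $c_1$ (respectively $c_2$) to $v$. The arc shift on $\alpha$ is of type $\boldsymbol{\bar{A}_h}$: it swaps the two adjacent arrowheads \emph{and} flips both signs. The arc shifts on $\beta$ and $\gamma$, because each involves one classical and one virtual crossing, are of type $\boldsymbol{\bar{A}_s}$: each flips the sign of exactly one of $c_1,c_2$ and permutes nothing. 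Hence the composite effect of $RAS$ at $R$ on the Gauss diagram is: swap the two heads (from $\alpha$), then flip each sign once more (from $\beta,\gamma$), giving the head swap with signs unchanged---precisely the Gauss-diagram effect of $F_h$. So $G(D_R)=G(D')$, and $D_R\sim D'$. The case of $F_o$ (tail--tail) is identical with $\boldsymbol{\bar{A}_t}$ in place of $\boldsymbol{\bar{A}_h}$.

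In short: your ``each chord is touched an even number of times'' intuition is exactly right, but the \emph{reason} it holds is that two of the three boundary arcs of $R$ meet the virtual crossing and hence act as $\boldsymbol{\bar{A}_s}$ moves. Once you say that, the proof is a two-line diagram check; no appeal to Theorem~3.1, Proposition~5, or parallel-chord arguments is needed.
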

\begin{proof}
Consider the diagram $D$ and a specific region $R$ in $D$ as shown in the Fig.~\ref{RASFBD1}. Boundary of region $R$ contains three arcs $\alpha, \beta$ and $\gamma$. Applying region arc shift at region $R$ results in arc shift moves on the arcs $\alpha, \beta$ and $\gamma$.
\begin{figure}[H]
\centering
  \includegraphics[width=10cm,height=4.2cm]{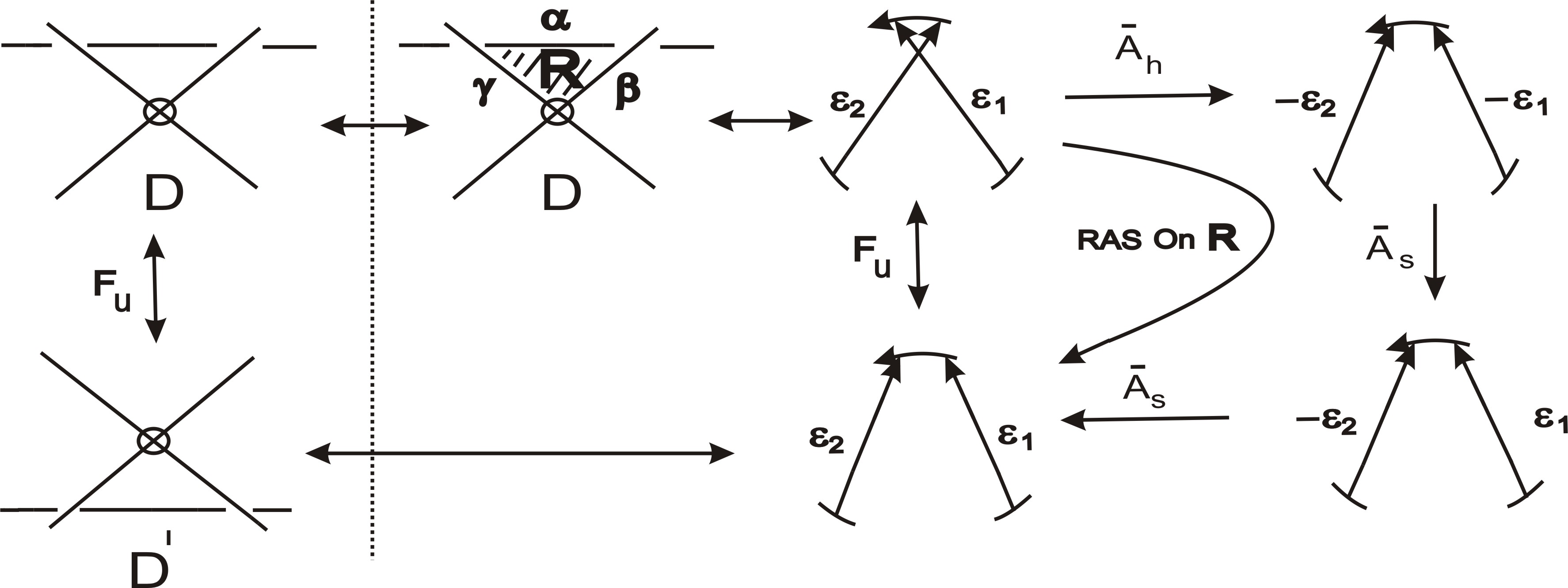}
 \caption{ Realizing forbidden move $F_{h}$ using region arc shift}
  \label{RASFBD1}
\end{figure}
It is easy to see from Fig.~\ref{RASFBD1} that while arc shift on the arc $\alpha$ corresponds to $\boldsymbol{\bar{A}_h}$, both $\beta$ and $\gamma$ correspond to $\boldsymbol{\bar{A}_s} $. We observe from Fig.~\ref{RASFBD1} that the diagram $D'$ has same Gauss diagram as has the diagram obtained by applying $RAS$ at region $R$ in $D$. Thus, $D'$ is equivalent to $D_R$ and the result follows.

Similar result can be identically proved for forbidden move $F_o$ also.
\end{proof}
\noindent As a consequence of proposition 8 we state following corollary without proof.
\begin{corollary} Every virtual knot diagram $D$ can be transformed into unknot using region arc shift operations and generalized Reidemeister moves.
\end{corollary}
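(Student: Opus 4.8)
The plan is to deduce the statement directly from Proposition 8 together with the classical unknotting result for forbidden moves, so the proof is essentially a substitution argument. First I would recall from \cite{kanenobu2001forbidden,nelson2001unknotting} that, for any virtual knot diagram $D$, there is a finite sequence
\[
D = D_0 \to D_1 \to \cdots \to D_k = O,
\]
where $O$ denotes the trivial knot diagram and each step $D_{i-1}\to D_i$ is either a forbidden move ($F_h$ or $F_t$) or a generalized Reidemeister move.

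Next I would process this sequence step by step. If a step $D_{i-1}\to D_i$ is a generalized Reidemeister move, leave it unchanged. If it is a forbidden move, apply Proposition 8 (in its $F_h$ form, or symmetrically its $F_t$ form): there is a region $R_i$ in a diagram equivalent to $D_{i-1}$ such that the region arc shift at $R_i$, followed by finitely many generalized Reidemeister moves, yields a diagram equivalent to $D_i$. Replacing each forbidden step in this way and concatenating, one obtains a finite sequence from $D$ to $O$ in which every step is a region arc shift operation or a generalized Reidemeister move, which is exactly the assertion. Note that no count of arc shifts is claimed, and finiteness is automatic because the Kanenobu--Nelson sequence is finite and each forbidden step is replaced by finitely many moves.

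The only point needing attention — and the reason this is phrased as a corollary rather than being immediate — is that Proposition 8, as proved, must be applied to an \emph{arbitrary} occurrence of a forbidden move, with arbitrary orientations and over/under information at the three arcs on $\partial R_i$. This is handled exactly as in the proof of Proposition 8: one first uses detour moves and the other virtual Reidemeister moves to bring the three strands participating in the forbidden move into the standard local configuration of Fig.~\ref{RASFBD1}, where the induced arc shifts on $\alpha,\beta,\gamma$ are recognizable as $\bar{A}_h$, $\bar{A}_s$, $\bar{A}_s$; comparing Gauss diagrams then shows $D_{R_i}$ is equivalent to the diagram produced by the forbidden move. I would expect verifying this reduction to the standard local picture — rather than the combinatorial substitution itself — to be the main (and only mildly technical) obstacle.
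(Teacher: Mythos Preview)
Your proposal is correct and is exactly the argument the paper intends: the paper states this corollary \emph{without proof}, saying only that it is ``a consequence of proposition 8,'' after having recalled the Kanenobu--Nelson result that forbidden moves unknot every virtual knot. Your substitution of each forbidden move in a Kanenobu--Nelson unknotting sequence by a single region arc shift (via Proposition~8, applied to both $F_h$ and $F_t$) is precisely this consequence made explicit; the final paragraph about normalizing the local picture is a harmless elaboration rather than a genuine additional step, since a forbidden move already has the standard local form by definition.
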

\noindent On a similar note as the arc shift number we define region arc shift number as follows.
\begin{definition}
\emph{For a given virtual knot $K$, the {\it region arc shift number} $R(K)$ is the minimum number of region arc shift operations required to deform $K$ into trivial knot.}
\end{definition}

\noindent It is easy to observe that {\it region arc shift number} is a virtual knot invariant. The {\it region arc shift number} for a virtual knot $K$ is zero if and only if $K$ is trivial knot. Virtual knots shown in Fig.~\ref{RASIS1} have {\it region arc shift number} one. Region arc shift at region $R$ in both the diagrams gives a trivial knot diagram.
\begin{figure}[H]
\centering
  \includegraphics[width=8.5cm,height=3cm]{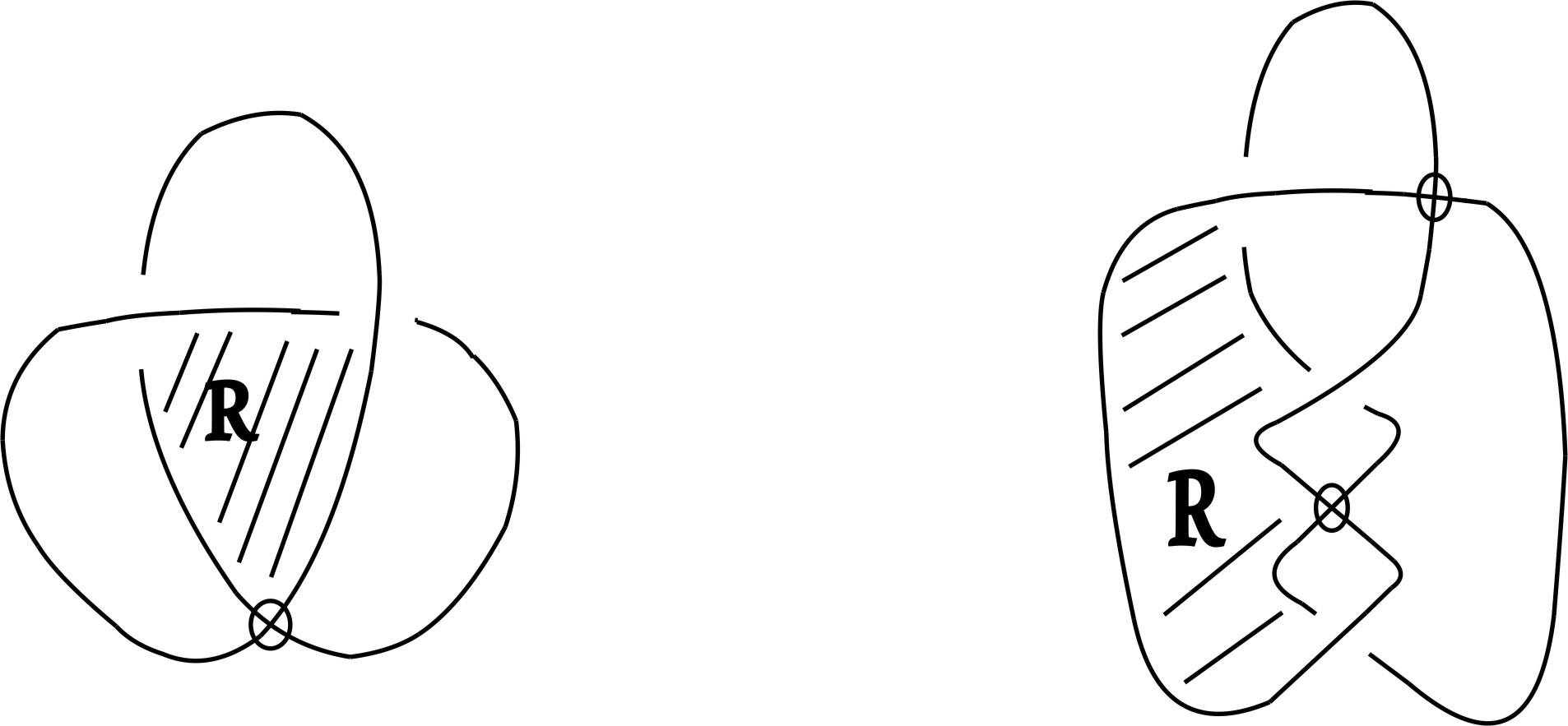}
 \caption{ Region arc shift number is $1$ for both knots }
  \label{RASIS1}
\end{figure}
\noindent In the following theorem we compare two numbers $R(K)$ and $F(K)$ and provide a relation between them in form of an inequality. 
\begin{theorem}
If $K$ is a virtual knot, then $R(K)\leq F(K).$
\end{theorem}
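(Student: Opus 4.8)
The plan is to show that every unknotting sequence realizing $F(K)$ can be converted, move-for-move, into an unknotting sequence by region arc shift operations of the same length, so that the minimum over all region arc shift sequences cannot exceed $F(K)$. The key input is Proposition 8, which says that each forbidden move $F_h$ (equivalently $F_t$, by the symmetric argument) applied to a diagram $D$ can be realized, up to generalized Reidemeister equivalence, by a single region arc shift at a suitable region $R$ of $D$; that is, $D_R \sim D'$ where $D'$ is the result of the forbidden move. Since a forbidden move is a strictly local modification, the region $R$ can always be located: the three arcs $\alpha,\beta,\gamma$ appearing in Fig.~\ref{RASFBD1} are exactly the arcs meeting the small disk in which the forbidden move is performed.

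First I would fix a diagram $D_0$ of $K$ together with a shortest sequence
\[
D_0 \;\longrightarrow\; D_1 \;\longrightarrow\; \cdots \;\longrightarrow\; D_m
\]
in which $D_m$ is the trivial knot diagram, each step is either a generalized Reidemeister move or a forbidden move ($F_h$ or $F_t$), and exactly $F(K)$ of the steps are forbidden moves. Next, walking along this sequence, I would replace each forbidden-move step $D_i \to D_{i+1}$ by: the region arc shift $D_i \to (D_i)_{R_i} \sim D_{i+1}$ furnished by Proposition 8 (using its stated analogue for $F_t$), followed by the finite string of generalized Reidemeister moves witnessing $(D_i)_{R_i} \sim D_{i+1}$. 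The generalized-Reidemeister steps of the original sequence are left untouched. This produces a new sequence from $D_0$ to $D_m$ in which the only non-(generalized Reidemeister) moves are region arc shifts, and their number equals the number of forbidden moves in the original sequence, namely $F(K)$. Hence $K$ can be unknotted with at most $F(K)$ region arc shift operations, so $R(K)\le F(K)$. Finally, by Proposition~\ref{} — the invariance statement preceding Definition 5.1, which shows the count does not depend on the chosen diagram — $R(K)$ is well defined and the inequality is independent of the starting diagram $D_0$.

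The only subtle point is making sure Proposition 8 applies at every forbidden-move step: it is stated for a diagram $D$ containing "a specific region $R$ as shown in Fig.~\ref{RASFBD1}", but since the forbidden move only alters a small disk and leaves everything outside it fixed, the local picture around that disk in $D_i$ is exactly the one in the proposition (after an isotopy of the plane), and the region $R_i$ bounded by the three participating arcs is present in $D_i$. I expect this verification — that the hypothesis of Proposition 8 is met locally at each forbidden move, independent of the ambient diagram — to be the main (and essentially only) obstacle, and it is handled by the locality of both operations. A two-line proof along these lines is all that is required; no further computation is needed.
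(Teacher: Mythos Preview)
Your proposal is correct and follows essentially the same approach as the paper: take a minimal unknotting sequence with $F(K)$ forbidden moves, invoke Proposition~8 (and its $F_t$ analogue) to replace each forbidden move by a single region arc shift, and conclude $R(K)\le F(K)$. The paper's proof is the two-line version of exactly this argument, without the additional commentary on locality and well-definedness.
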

\begin{proof}
Suppose that forbidden number for the virtual knot $K$ is $n$. If $D$ is a diagram of $K$, then there exists a sequence involving generalized Reidemeister moves and $n$ number of forbidden moves which transforms $D$ into trivial knot diagram. Using proposition 7, we can realize each forbidden move by applying a single region arc shift operation. Replacing $n$ forbidden moves with $n$ {\it region arc shift operations} in the above sequence, we can deform $D$ into trivial knot. Thus $R(K)\leq n$ and hence the result follows.
\end{proof}

\begin{remark}
Strict inequality in $R(K)\leq F(K)$ may hold for some virtual knots. As an example, virtual knot shown in Fig.~\ref{RASISLESS} has $F(K)=2$, while $R(K)=1$.
\end{remark}
\begin{figure}[H]
\centering
  \includegraphics[width=2.5cm,height=4.5cm]{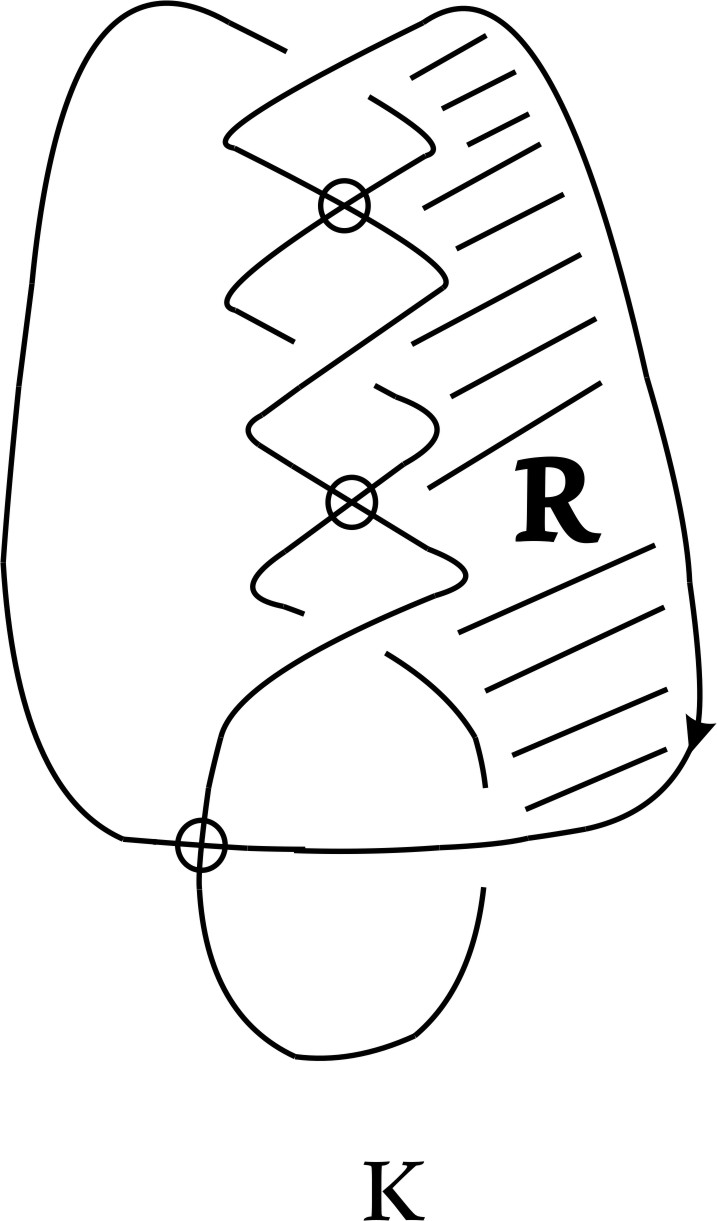}
 \caption{ }
  \label{RASISLESS}
\end{figure}

\noindent One of the important consequence of forbidden moves is the forbidden detour move, denoted by FD, shown in Fig.~\ref{FDETOUR} . It was proved in \cite{kanenobu2001forbidden} that it needs both the forbidden moves $F_u$ and $F_o$ to realize FD including some generalized Reidemeister moves. FD move has the affect of moving across an adjacent head with tail in the Gauss diagram corresponding to a virtual knot diagram, see Fig.~\ref{FDETOUR}.

\begin{figure}[H]
\centering
  \includegraphics[width=9cm,height=1.5cm]{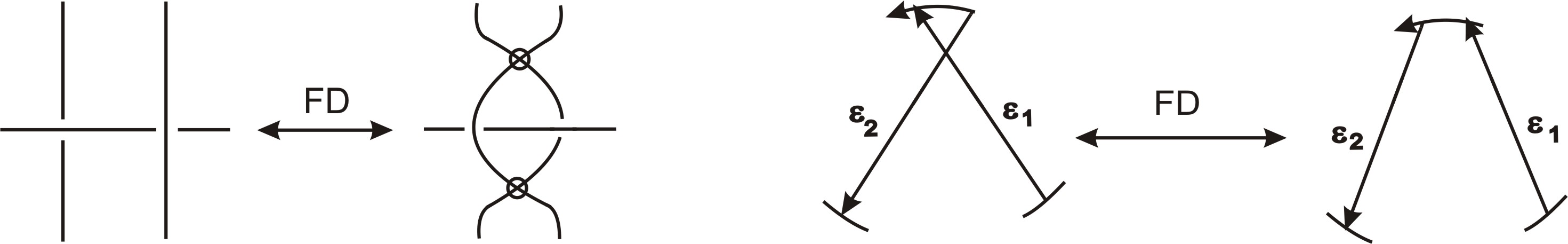}
 \caption{Forbidden detour move }
  \label{FDETOUR}
\end{figure}

\noindent We realize FD move in a single region arc shift operation at a specific region R in the diagram obtained from original diagram by $VR_2$ move. As shown in the Fig.~\ref{FDETOURR}, region $R$ in the diagram obtained by $VR_2$ move in $D$ contains arcs  $\alpha, \beta$ and $\gamma$ as its boundary.
\begin{figure}[H]
\centering
  \includegraphics[width=9cm,height=3.7cm]{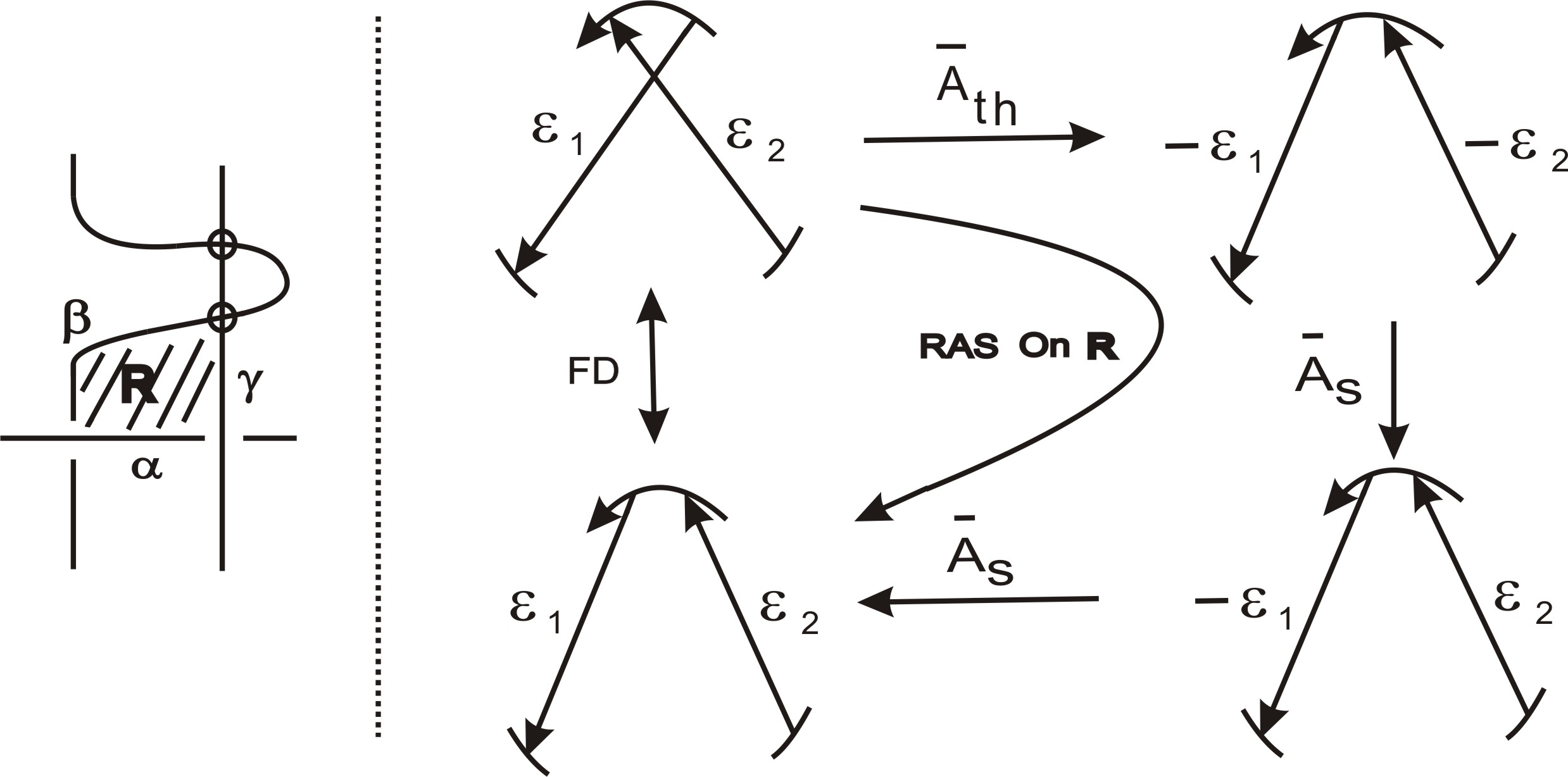}
\caption{FD move via region arc shift at region R }
  \label{FDETOURR}
\end{figure}
Therefore, using the similar argument as in the proof of proposition 8, diagrams obtained by FD move in $D$ and $RAS$ at region $R$ are equivalent virtual knot diagrams and hence the result follows.


\begin{thebibliography}{10}


\bibitem{crans2015forbidden}
A.~Crans, B.~Mellor, S.~Ganzell, \emph{The forbidden number of a knot}, Kyungpook Math. J. \textbf{55} (2015), no. 2, 485--506.

\bibitem{goussarov2000finite}
M.~Goussarov, M.~Polyak, O.~Viro \emph{Finite-type invariants of classical and virtual knots}, Topology \textbf{39} (2000), no. 05, 1045–-1068.

\bibitem{kanenobu2001forbidden}
T.~Kanenobu, \emph{Forbidden moves unknot a virtual knot}, Journal of Knot Theory and Its Ramifications \textbf{10} (2001), no. 01, 89--96.

\bibitem{kauffman1999virtual}
L.~H.~Kauffman, \emph{Virtual knot theory}, European J. Combin. , \textbf{20} (1999), no. 7, 663–690.

\bibitem{kauffman2004self}
L.~H.~Kauffman, \emph{A self-linking invariant of virtual knots}, Fund. Math. \textbf{184} (2004), 135--158.

\bibitem{murakami1989certain}
H.~Murakami, \emph{On a certain move generating link-homology}, Math. Ann. \textbf{284} (1989), no. 01, 75--89.

\bibitem{nelson2001unknotting}
S.~Nelson, \emph{Unknotting virtual knots with Gauss diagram forbidden moves}, Journal of Knot Theory and Its Ramifications  {\bf 10}(2001), no. 6, 931–-935.

 \bibitem{shimizu2014region}
A.~Shimizu, \emph{Region crossing change is an unknotting operation}, J. Math. Soc. Japan \textbf{66} (2014), no. 3, 693–-708.



 









\end{thebibliography}

\end{document}